\numberwithin{equation}{section}
\newtheoremstyle{note}
{1em}
{1em}
{}
{}
{\bfseries}
{:}
{.5em}
{}
\newtheorem{theorem}{Theorem}[section]
\newtheorem{lemma}[theorem]{Lemma}
\newtheorem{proposition}[theorem]{Proposition}
\newtheorem{corollary}[theorem]{Corollary}
\theoremstyle{note}
\newtheorem{remark}[theorem]{Remark}
\newtheorem{definition}[theorem]{Definition}
\newcommand{\N}{{\mathbb{N}}}
\newcommand{\n}[1]{ \left\|#1\right\| }
\newcommand{\pair}[2]{{\langle #1, #2 \rangle}}
\DeclareMathOperator{\HS}{HS}
\DeclareMathOperator{\tr}{tr}
\DeclareMathOperator{\spa}{span}
\DeclareMathOperator{\rank}{rank}
\DeclareMathOperator{\asdim}{asdim}
\DeclareMathOperator{\diam}{diam}
\DeclareMathOperator{\dist}{dist}
\DeclareMathOperator{\id}{Id}
\title[Quantum Asymptotic Dimension]{Asymptotic dimension and coarse embeddings in the quantum setting}
\author{Javier Alejandro Ch\'avez-Dom\'inguez}
\address{Department of Mathematics, University of Oklahoma, Norman, OK 73019-3103, USA} 
\email{jachavezd@ou.edu}
\thanks{The first-named author was partially supported by NSF grant DMS-1900985.}
\author{Andrew T. Swift}
\address{Department of Mathematics, University of Oklahoma, Norman, OK 73019-3103,
USA}
\email{ats0@ou.edu}
\subjclass[2010]{ Primary: 46L52; Secondary: 54F45, 46L51, 46L65, 81R60  }
\keywords{Quantum metric spaces; Asymptotic dimension; Quantum expanders.}
\begin{document}

\begin{abstract}
We generalize the notions of asymptotic dimension and coarse embeddings from metric spaces to quantum metric spaces in the sense of Kuperberg and Weaver \cite{KuperbergWeaver}.
We show that  quantum asymptotic dimension behaves well with respect to metric quotients and direct sums, and is preserved under quantum coarse embeddings.
Moreover, we prove that a quantum metric space that equi-coarsely contains a sequence of expanders must have infinite asymptotic dimension.
This is done by proving a quantum version of a vertex-isoperimetric inequality for expanders, based upon a previously known edge-isoperimetric one from \cite{TKRWV}.
\end{abstract}

\maketitle
\section{Introduction}
In \cite{KuperbergWeaver} the authors explore a generalization of metric spaces, called quantum metric spaces, which is related to the quantum graphs of quantum information theory; and they construct many generalizations of familiar metric space concepts, including a generalization of Lipschtiz map they call a co-Lipschitz morphism.  The purpose of this paper is to contribute to the structure theory of quantum metric spaces by applying ideas coming from the large-scale or coarse geometry of classical metric spaces.  Specifically, we propose generalizations of coarse embeddings and asymptotic dimension, and then prove some fundamental results about them.  Coarse geometry is an important area of mathematics with applications in group theory, Banach space theory, and computer science.  It began as a field of study with the polynomial growth theorem of Gromov \cite{gromov1981}, and the definition of coarse embedding and asymptotic dimension are also both due to Gromov \cite{Gromov1993}, although large-scale geometric ideas appear as early as the late 1960's in the original proof of Mostow's rigidity theorem \cite{Mostow1968}.  We refer to \cite{Nowak-Yu} for an excellent introduction to the subject.

Some notions from coarse geometry have already been explored in the noncommutative setting, see e.g. \cite{banerjee2015coarse,banerjee2016noncommutative}.
There are two important differences between these works and the present paper:  One is that they define and study noncommutative versions of coarse spaces whereas we are studying noncommutative metric spaces in a coarse fashion, but more importantly their approach is $C^*$-algebraic while ours (as clearly stated by the title of \cite{KuperbergWeaver}) is a von Neumann algebra one.
A number of noncommutative notions of topological dimension for $C^*$-algebras have been also studied in the literature, see e.g. \cite{Rieffel,Brown-Pedersen,Kirchberg-Winter,Winter,Winter-Zacharias}.
Let us emphasize in particular the nuclear dimension from \cite{Winter-Zacharias}, 
which is linked to coarse geometry:  For a discrete metric space of bounded geometry, the nuclear dimension of the associated uniform Roe algebra is dominated by the asymptotic dimension of the underlying space.
Once again, the approach to dimension in the present work is significantly different from the aforementioned ones because we are following the von Neumann algebra path.

In Section \ref{sec:definitions} we recall the definitions we need from \cite{KuperbergWeaver}, including quantum metric, distance, and diameter.  In Section \ref{sec:moduli} we generalize the definition of coarse embedding to quantum metric spaces using alternative versions of the usual moduli of expansion (or uniform continuity) and compression defined for classical functions.  It is shown that the moduli for classical functions and their canonically induced quantum functions coincide.  In Section \ref{sec:asdim} we generalize the definition of asymptotic dimension to quantum metric spaces and show that asymptotic dimension is preserved under coarse embeddings.  We show as a consequence that the asymptotic dimension of a quotient of a quantum metric space is no greater than the asymptotic dimension of the original space and that the asymptotic dimension of a direct sum of quantum metric spaces is equal to the maximum of the asymptotic dimensions of its summands.  We finish Section \ref{sec:asdim} by establishing the corresponding inequality for arbitrary sums of quantum metric spaces in the case when the sum is a reflexive quantum metric space.  In Section \ref{sec:expanders}, we show that quantum expanders satisfy a quantum version of a vertex isoperimetric inequality.  This can be used to show that a quantum metric space has infinite asymptotic dimension if it equi-coarsely contains a sequence of reflexive quantum expanders.  In particular, this includes the case when the  sequence of quantum expanders is induced by a sequence of classical expanders.

\section{Definitions}\label{sec:definitions}
We use the definitions of quantum metric space and related notions found in \cite{KuperbergWeaver}.  Just as metrics are defined on sets, quantum metrics are defined on von Neumann algebras, and classical metrics on a set $X$ are in a natural one-to-one correspondence with quantum metrics on $\ell_\infty(X)$.  We view von Neumann algebras as subsets of some space $\mathcal{B}(\mathcal{H})$ of bounded linear operators  on a Hilbert space $\mathcal{H}$. Given the von Neumann algebras $\mathcal{M}\subseteq \mathcal{B}(\mathcal{H}_1)$, $\mathcal{N}\subseteq \mathcal{B}(\mathcal{H}_2)$, we denote by $\mathcal{M}\overline{\otimes}\mathcal{N}$ their normal spatial tensor product, that is, the weak*-closure of $\mathcal{M}\otimes\mathcal{N}$ in $\mathcal{B}(\mathcal{H}_1\otimes_2 \mathcal{H}_2)$.  Given a von Neumann algebra $\mathcal{M}$, we denote the commutant of $\mathcal{M}$ by $\mathcal{M}'$.  An orthogonal projection in a von Neumann algebra will simply be called a projection.

\begin{definition}[{\cite[Definition 2.3]{KuperbergWeaver}}]
A \emph{quantum metric} on a von Neumann algebra $\mathcal{M}\subseteq \mathcal{B}(\mathcal{H})$ is a family $\mathbb{V}=\{\mathcal{V}_t\}_{t\in [0,\infty)}$ of weak*-closed subspaces of $\mathcal{B}(\mathcal{H})$ such that $\mathcal{V}_0=\mathcal{M}'$ and for all $t\in [0,\infty)$,
\begin{itemize}
\item $\mathcal{V}_t$ is self-adjoint.
\item $\mathcal{V}_s\mathcal{V}_t\subseteq\mathcal{V}_{s+t}$ for all $s\in [0,\infty)$.
\item $\mathcal{V}_t=\bigcap_{s>t}\mathcal{V}_s$.
\end{itemize}
A \emph{quantum metric space} is a pair $(\mathcal{M}, \mathbb{V})$ of a von Neumann algebra $\mathcal{M}$ with a quantum metric $\mathbb{V}$ defined on it.  We will often simply call a quantum metric space by its von Neumann algebra if there is no ambiguity regarding the quantum metric being considered.
\end{definition}

Given a metric space $(X,d)$, the canonical quantum metric space associated to it \cite[Proposition 2.5]{KuperbergWeaver} is $(\ell_\infty(X), \{\mathcal{V}_t\}_{t\in [0,\infty)})$, where \[\mathcal{V}_t=\big\{A\in \mathcal{B}(\ell_2(X))\mid \langle Ae_x,e_y \rangle=0\mbox{ for all } (x,y)\notin d^{-1}[0,t]\big\}\] for all $t\in[0,\infty ).$  Here $(e_x)_{x\in X}$ is the canonical basis of $\ell_2(X)$ and $\ell_\infty (X)$ is viewed as a subset of $\mathcal{B}(\ell_2(X))$ as diagonal operators in the standard way, that is, via the map $E\colon \ell_\infty\to \mathcal{B}(\ell_2(X))$ defined by $E(\phi)[f](x)=\phi(x)f(x)$ for all $\phi\in \ell_\infty(X)$, all $f\in\ell_2(X)$, and all $x\in X$.  It is in this way that quantum metric spaces are generalizations of classical metric spaces.
A natural distance function can be defined for projections in a von Neumann algebra that generalizes the notion of distance between subsets of a classical metric space.

\begin{definition}[{\cite[Definition 2.6]{KuperbergWeaver}}]
\label{def:distance}
Given a quantum metric $\mathbb{V}=\{\mathcal{V}_t\}_{t\in[0,\infty
)}$ on a von Neumann algebra $\mathcal{M}$, the \emph{distance} between two projections $P$ and $Q$ in $\mathcal{M}\overline{\otimes}\mathcal{B}(\ell_2)$ is 
\[\dist_\mathbb{V}(P,Q)=\inf\{t\mid P(A\otimes \id)Q\neq 0 \mbox{ for some }A\in \mathcal{V}_t\}.\]
Identifying $\mathcal{M}$ with $\mathcal{M}\otimes \id\subseteq \mathcal{M}\overline{\otimes} \mathcal{B}(\ell_2)$ yields the equivalent formula
\[\dist_\mathbb{V}(P,Q)=\inf\{t\mid PAQ\neq 0 \mbox{ for some }A\in \mathcal{V}_t\}\]
for projections $P,Q$ in $\mathcal{M}$.  The subscript will usually be omitted if there is no ambiguity regarding the quantum metric being used.
\end{definition}

We refer to \cite[Definition 2.7]{KuperbergWeaver} and \cite[Proposition 2.8]{KuperbergWeaver} for basic properties of quantum distance.  In particular, the distance function associated to a quantum metric satisfies an analog of the triangle inequality.  If $\mathcal{M}$ is a quantum metric space and $P,Q, R$ are projections in $\mathcal{M}\overline{\otimes}\mathcal{B}(\ell_2)$, then $\dist (P,Q)\leq \dist (P,R)+\sup \{\dist (\tilde{R},Q) \mid R\tilde{R}\neq 0\}$, where $\tilde{R}$ ranges over projections in $\mathcal{M}\overline{\otimes}\mathcal{B}(\ell_2)$.  Note that the same proof found in \cite[Proposition 2.8]{KuperbergWeaver} shows that if $P,Q,R$ are projections in $\mathcal{M}$, then $\tilde{R}$ may be taken to range only over projections in $\mathcal{M}$.   

It is not hard to see that if $(\ell_\infty(X),\mathbb{V})$ is the canonical quantum metric space associated to metric space $(X,d)$, then $\dist_\mathbb{V}(\chi_S,\chi_T)=d(S,T)$ for any subsets $S,T$ of $X$.  Likewise, the following definitions for diameter and open $\varepsilon$-neighborhood of a projection generalize the corresponding notions for a subset.

\begin{definition}[{\cite[Proposition 2.16]{KuperbergWeaver}}]
\label{def:diameter}
Given a quantum metric $\mathbb{V}=\{\mathcal{V}_t\}_{t\in[0,\infty
)}$ on a von Neumann algebra $\mathcal{M}$, the \emph{diameter} of a nonzero projection $P$ in $\mathcal{M}$ is
\[\diam_\mathbb{V}(P)=\sup \big\{\dist_\mathbb{V}(Q,R)\mid Q(PAP\otimes \id)R\neq 0 \mbox{ for some }A\in \mathcal{B}(\mathcal{H})\big\}.\]
The diameter of the zero projection is defined to be zero.
The subscript will usually be omitted if there is no ambiguity regarding the quantum metric being used.
\end{definition}

We remark that \cite{KuperbergWeaver} only considers the case of $P$ being the identity of $\mathcal{M}$ in the definition above, so our notion is really a generalization of theirs.

\begin{definition}[{\cite[Proposition 2.17]{KuperbergWeaver}}]
\label{def:neighborhood}
Given a quantum metric $\mathbb{V}=\{\mathcal{V}_t\}_{t\in[0,\infty
)}$ on a von Neumann algebra $\mathcal{M}$, the \emph{open $\varepsilon$-neighborhood} of a projection $P$ in $\mathcal{M}$ is the projection
\[(P)_\varepsilon=\id-\bigvee\big\{Q\in \mathcal{M}\mid \dist_\mathbb{V}(P,Q)\geq \varepsilon\big\}.\]
\end{definition}

It is easy to see from \cite[Definition 2.14 (b)]{KuperbergWeaver} that $((P)_{\varepsilon})_\delta\leq (P)_{\varepsilon+\delta}$ for all $\varepsilon, \delta >0$ and projections $P$ in a quantum metric space $\mathcal{M}$.

Our next definition follows \cite{kornell2011quantum}.

\begin{definition}
A function $\phi\colon \mathcal{M}\to \mathcal{N}$ between two von Neumann algebras $\mathcal{M}$ and $\mathcal{N}$ is called a \emph{quantum function} if $\phi$ is a unital weak*-continuous $*$-homomorphism.
\end{definition}

Quantum functions generalize classical functions in the following way:  If $f\colon X\to Y$ is a classical function between sets, then $\phi_f\colon \ell_\infty(Y)\to \ell_\infty(X)$, defined by $\phi_f(g)=g\circ f$ for all $g\in \ell_\infty (Y)$, is the canonical quantum function associated to $f$ between the canonical von Neumann algebras associated to $Y$ and $X$, and is such that $\phi_f(\chi_{\{y\}})=\chi_{\{f^{-1}(y)\}}$ for all $y\in Y$. 

In what follows, we will often identify a von Neumann algebra $\mathcal{M}$ with $\mathcal{M}\otimes\id\subseteq \mathcal{M}\overline{\otimes}\mathcal{B}(\ell_2)$ and a quantum function $\phi\colon \mathcal{M}\to\mathcal{N}$ with $(\phi\otimes\id)\colon \mathcal{M}\overline{\otimes}\mathcal{B}(\ell_2)\to \mathcal{N}\overline{\otimes}\mathcal{B}(\ell_2)$.

Recall \cite[p. 17]{KuperbergWeaver} that two projections $P,Q \in \mathcal{M}\overline{\otimes} \mathcal{B}(\ell_2)$ are said to be \emph{unlinkable} if there exist $\tilde{P},\tilde{Q} \in \id \otimes \mathcal{B}(\ell_2)$ satisfying $P \le \tilde{P}$, $Q \le \tilde{Q}$ and $\tilde{P}\tilde{Q} = 0$; otherwise, they are said to be \emph{linkable}.
By \cite[Prop. 2.13]{KuperbergWeaver}, for a von Neumann algebra $\mathcal{M} \subseteq \mathcal{B}(H)$, two projections $P,Q \in \mathcal{M}\overline{\otimes} \mathcal{B}(\ell_2)$ are linkable if and only if there exists $A \in \mathcal{B}(H)$ such that $P(A \otimes \id)Q \not=0$.

Note that we may generalize these notions to non-projections:
two operators $S,T \in \mathcal{M}\overline{\otimes} \mathcal{B}(\ell_2)$ are said to be \emph{linkable} if and only if there exists $A \in \mathcal{B}(H)$ such that $S(A \otimes \id)T \not=0$; obviously, this is the same as saying that the source projection $\llbracket S \rrbracket$ of $S$ and the range projection $[T]$ of $T$ are linkable.
By the aforementioned characterization, two operators $S,T \in \mathcal{M}\overline{\otimes} \mathcal{B}(\ell_2)$ are not linkable (i.e. \emph{unlinkable}) if and only if there exist projections $\tilde{S},\tilde{T} \in \id \otimes \mathcal{B}(\ell_2)$ satisfying $\llbracket S \rrbracket \le \tilde{S}$, $[T] \le \tilde{T}$ and $\tilde{S}\tilde{T} = 0$.
Note that this is closely related to our definition of diameter of a projection:
For a projection $P$ in $\mathcal{M}$, its diameter is the supremum of the distances $\dist(Q,R)$ where $Q,R \in \mathcal{M}\overline{\otimes} \mathcal{B}(\ell_2)$ are projections such that  $Q(P\otimes \id)$ and $(P \otimes \id)R$ are linkable.

The following lemma shows that when $\phi$ is a quantum function, $\phi \otimes \id$ maps unlinkable pairs of operators to unlinkable pairs of operators.

\begin{lemma}\label{lemma:quantum-function-unlinkable}
Let $\phi : \mathcal{M} \to \mathcal{N}$ be a quantum function between von Neumann algebras. If $S,T \in \mathcal{M}\overline{\otimes} \mathcal{B}(\ell_2)$ are unlinkable operators, then $(\phi \otimes \id)S, (\phi \otimes \id)T \in \mathcal{N}\overline{\otimes} \mathcal{B}(\ell_2)$ are unlinkable as well.
\end{lemma}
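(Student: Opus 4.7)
The plan is to invoke the characterization of unlinkability stated just before the lemma: $S, T \in \mathcal{M} \overline{\otimes} \mathcal{B}(\ell_2)$ are unlinkable precisely when there exist projections $\tilde{S}, \tilde{T} \in \id \otimes \mathcal{B}(\ell_2)$ with $\llbracket S \rrbracket \leq \tilde{S}$, $[T] \leq \tilde{T}$, and $\tilde{S}\tilde{T} = 0$. Given such witnesses on the $\mathcal{M}$-side, I would write them in the form $\tilde{S} = \id_{\mathcal{M}} \otimes \tilde{s}$ and $\tilde{T} = \id_{\mathcal{M}} \otimes \tilde{t}$ for projections $\tilde{s}, \tilde{t} \in \mathcal{B}(\ell_2)$ with $\tilde{s}\tilde{t}=0$, and then propose $\id_{\mathcal{N}} \otimes \tilde{s}$ and $\id_{\mathcal{N}} \otimes \tilde{t}$ as the witnesses required on the $\mathcal{N}$-side.

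The main step is to restate the domination conditions as absorption identities: $\llbracket S \rrbracket \leq \tilde{S}$ is equivalent to $S\tilde{S} = S$, and $[T] \leq \tilde{T}$ is equivalent to $\tilde{T}T = T$. Pushing these through the unital $*$-homomorphism $\phi \otimes \id$, which sends $\id_{\mathcal{M}} \otimes b$ to $\id_{\mathcal{N}} \otimes b$ for every $b \in \mathcal{B}(\ell_2)$, produces the corresponding absorption identities
\[ \bigl[(\phi \otimes \id)S\bigr]\bigl(\id_{\mathcal{N}} \otimes \tilde{s}\bigr) = (\phi \otimes \id)S, \qquad \bigl(\id_{\mathcal{N}} \otimes \tilde{t}\bigr)\bigl[(\phi \otimes \id)T\bigr] = (\phi \otimes \id)T, \]
which reinterpret as the required domination of the source projection of $(\phi \otimes \id)S$ and the range projection of $(\phi \otimes \id)T$. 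Orthogonality of the new witnesses is immediate from $\tilde{s}\tilde{t} = 0$, and a final appeal to the characterization closes the argument.

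I do not expect a real obstacle; the proof is essentially definition-chasing. The only features of $\phi$ that genuinely enter are unitality and multiplicativity, which together ensure that $\phi \otimes \id$ respects the absorption identities and keeps the witness projections inside $\id \otimes \mathcal{B}(\ell_2)$. Weak*-continuity of $\phi$ is used only implicitly, to justify the existence of the extension $\phi \otimes \id$ as a map between the normal spatial tensor products in the first place.
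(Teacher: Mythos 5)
Your proof is correct, but it takes a genuinely different and more elementary route than the paper's. The paper keeps the domination conditions in the form $\llbracket S \rrbracket \le \tilde{S}$ and $[T] \le \tilde{T}$ and then pushes them through $\phi\otimes\id$ by proving the identities $(\phi\otimes\id)\llbracket S \rrbracket = \llbracket (\phi\otimes\id)S \rrbracket$ and $(\phi\otimes\id)[T] = [(\phi\otimes\id)T]$; this requires showing that $\phi\otimes\id$ is normal, hence $\sigma$-strongly continuous, and then passing to the strong limit of the bounded net $(S^*S)^\alpha \to \llbracket S \rrbracket$ as $\alpha \to 0$. You sidestep all of that by converting the domination conditions into the absorption identities $S\tilde{S}=S$ and $\tilde{T}T=T$ (which are indeed equivalent, since $\llbracket S \rrbracket$ is the smallest projection $P$ with $SP=S$ and $[T]$ is the smallest projection $Q$ with $QT=T$), applying the unital $*$-homomorphism $\phi\otimes\id$, and reading the resulting identities back as $\llbracket (\phi\otimes\id)S\rrbracket \le \id_{\mathcal{N}}\otimes\tilde{s}$ and $[(\phi\otimes\id)T] \le \id_{\mathcal{N}}\otimes\tilde{t}$. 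The key observation is that the lemma only needs an \emph{inequality} relating $\llbracket(\phi\otimes\id)S\rrbracket$ to the image of a dominating projection, not the exact identification of the source projection of the image, so unitality and multiplicativity of $\phi\otimes\id$ suffice and normality never enters (beyond the implicit use, which the paper shares, that $\phi\otimes\id$ exists as a unital $*$-homomorphism on the von Neumann tensor product). What the paper's heavier argument buys is the stronger, independently useful fact that quantum functions commute with taking source and range projections; what your argument buys is brevity and fewer analytic prerequisites.
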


\begin{proof}
As above, let $\tilde{S},\tilde{T} \in \id \otimes \mathcal{B}(\ell_2)$ be projections satisfying $\llbracket S \rrbracket \le \tilde{S}$, $[T] \le \tilde{T}$, and $\tilde{S}\tilde{T} = 0$.
Note that since $\phi \otimes \id$ is a $*$-homomorphism and thus preserves order, $(\phi \otimes \id)\tilde{S},(\phi \otimes \id)\tilde{T} \in \id \otimes \mathcal{B}(\ell_2)$ are projections and they satisfy
$(\phi\otimes \id)\llbracket S \rrbracket \le (\phi \otimes \id)\tilde{S}$, $(\phi\otimes \id)[T] \le (\phi \otimes \id)\tilde{T}$, and
$(\phi\otimes \id)\tilde{S}(\phi\otimes \id)\tilde{T} = 0$.
The desired result will follow once we prove that $(\phi\otimes \id)\llbracket S \rrbracket= \llbracket(\phi\otimes \id)S\rrbracket$ and $(\phi\otimes \id)[T] = [(\phi \otimes \id)T]$.

By our definition of quantum function, $\phi$ is a weak* to weak* continuous $*$-homomorphism.
By \cite[Prop. III.2.2.2]{Blackadar}, $\phi$ is normal.
Since the tensor product of normal completely positive contractions is again a normal positive contraction \cite[III.2.2.5]{Blackadar}, $\phi \otimes \id$ is normal. 
By \cite[Prop. III.2.2.2]{Blackadar} again,
$\phi \otimes \id$ is $\sigma$-strong to $\sigma$-strong continuous from 
$\mathcal{M}\overline{\otimes} \mathcal{B}(\ell_2)$ to
$\mathcal{N}\overline{\otimes} \mathcal{B}(\ell_2)$.
It is  well-known that the $\sigma$-strong and the strong topologies coincide on bounded sets \cite[I.3.1.4]{Blackadar}, so in particular it follows that $\phi \otimes \id$ maps bounded strongly convergent nets to bounded strongly convergent nets.

Now, it is known that $(S^*S)^\alpha \to \llbracket S \rrbracket$ strongly as $\alpha \to 0$ \cite[I.5.2.1]{Blackadar}.
Since $\{(S^*S)^\alpha\}_{\alpha\in(0,1)}$ is norm bounded (which can be easily shown using functional calculus),
we conclude that
\[
(\phi \otimes \id)\big( (S^*S)^\alpha \big) \xrightarrow[\alpha\to 0]{  } (\phi \otimes \id)\llbracket S\rrbracket
\]
strongly.
Since $(\phi \otimes \id)$ is a unital $*$-homomorphism we have $(\phi \otimes \id)\big( (S^*S)^\alpha \big) = \big( \big((\phi \otimes \id)S\big)^*\big((\phi \otimes \id)S\big) \big)^\alpha$ (again this can be easily shown using functional calculus). 
Now, 
\[
\big( \big((\phi \otimes \id)S\big)^*\big((\phi \otimes \id)S\big) \big)^\alpha \xrightarrow[\alpha\to 0]{  } \llbracket (\phi \otimes \id)S \rrbracket
\]
strongly, and therefore $\llbracket (\phi \otimes \id)S \rrbracket = (\phi \otimes \id)\llbracket S \rrbracket$.
The analogous conclusion for the range projection of $T$ follows from the fact that $[T]= \llbracket TT^* \rrbracket$ \cite[I.5.2.1]{Blackadar}.
\end{proof}

Some of our results will only apply to quantum metric spaces that are (operator) reflexive; we recall the definition below.

\begin{definition}[{\cite[Defns. 1.5 and 2.23]{KuperbergWeaver}}]
A subspace $\mathcal{V} \subseteq \mathcal{B}(H)$ is \emph{(operator) reflexive} if
$
\mathcal{V} = \big\{ B \in \mathcal{B}(H) \mid P\mathcal{V}Q =\{0\} \Rightarrow PBQ=0 \big\}
$
with $P$ and $Q$ ranging over projections in $\mathcal{B}(H)$.
A quantum metric $\mathbb{V}=\{\mathcal{V}_t\}_{t\in[0,\infty
)}$ is called \emph{reflexive} if $\mathcal{V}_t$ is reflexive for each $t\in[0,\infty)$.
\end{definition}

\section{Quantum moduli of expansion and compression}\label{sec:moduli}
In this section, we define coarse embeddings between quantum metric spaces using moduli and then show how this relates to the definitions of co-Lipschitz and co-isometric morphisms found in \cite{KuperbergWeaver}.

Recall that if $f\colon X\to Y$ is a map between metric spaces, we define its modulus of expansion $\omega_f$ by
\[
\omega_f(t)=\sup\big\{d_Y(f(x),f(y))\mid d_X(x,y)\leq t\big\}
\]
 and its modulus of compression $\rho_f$ by
\[
\rho_f(t)=\inf\big\{d_Y(f(x),f(y))\mid d_X(x,y)\geq t\big\}
\]
 for all $t\geq 0$.  We say that $f$ is \emph{expanding} if $\lim_{t\to\infty}\rho_f(t)=\infty$, and \emph{coarse} if $\omega_f(t)<\infty$ for all $t\geq 0$. 
We say that $f$ is a \emph{coarse embedding} if $f$ is both coarse and expanding. 

For our purposes, we will use  alternative versions of these moduli.
Let
\[
\tilde{\omega}_f(t)=\inf\big\{ d_X(x,y) \mid d_Y(f(x),f(y))\ge t\big\}
\]
and let
\[
\tilde{\rho}_f(t)=\sup\big\{ d_X(x,y) \mid d_Y(f(x),f(y))\le t\big\}.
\]
The following observation is surely well-known.

\begin{lemma}
\label{lem:coarse definition}
Let $f \colon X \to Y$ be a map between metric spaces. Then:
\begin{enumerate}[(a)]
\item  $f$ is coarse if and only if $\lim_{t\to\infty}\tilde{\omega}_f(t)=\infty$.
\item $f$ is expanding if and only if $\tilde{\rho}_f(t)<\infty$ for all $t \ge 0$.
\end{enumerate}
\end{lemma}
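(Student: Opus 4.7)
The plan is to prove each direction of each equivalence by a contrapositive/contradiction argument, exploiting the monotonicity of all four moduli in $t$. In particular $\omega_f$ and $\tilde{\omega}_f$ are non-decreasing since in both cases enlarging $t$ shrinks the set over which one takes the infimum (for $\tilde{\omega}_f$) or enlarges the set over which one takes the supremum (for $\omega_f$); similarly $\rho_f$ and $\tilde{\rho}_f$ are non-decreasing. This monotonicity lets me swap quantifiers between ``for all $t$'' and ``$\lim_{t\to\infty}$'' statements freely.

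For part (a), I would argue both directions by contrapositive. If $f$ is not coarse, pick $s\ge 0$ with $\omega_f(s)=\infty$; then there are pairs $(x_n,y_n)$ with $d_X(x_n,y_n)\le s$ and $d_Y(f(x_n),f(y_n))\to\infty$. Setting $t_n:=d_Y(f(x_n),f(y_n))$ gives $\tilde{\omega}_f(t_n)\le s$ with $t_n\to\infty$, so $\tilde{\omega}_f$ does not tend to $\infty$. Conversely, if $\tilde{\omega}_f$ does not tend to $\infty$, monotonicity gives $M<\infty$ with $\tilde{\omega}_f(t)\le M$ for all $t$; picking witnesses for each $t=n$ shows $\omega_f(M+1)=\infty$, so $f$ is not coarse.

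For part (b), the argument is essentially the same unfolding of definitions. If $\rho_f(s_0)\le C$ for some $s_0$ and all large $s$ (i.e.\ $f$ is not expanding, so by monotonicity $\sup_s \rho_f(s)=:C<\infty$), pick pairs $(x_n,y_n)$ with $d_X(x_n,y_n)\to\infty$ and $d_Y(f(x_n),f(y_n))\le C+1$; this forces $\tilde{\rho}_f(C+1)=\infty$. Conversely, if $\tilde{\rho}_f(t_0)=\infty$ for some $t_0$, there are pairs $(x_n,y_n)$ with $d_Y(f(x_n),f(y_n))\le t_0$ and $d_X(x_n,y_n)\to\infty$, so for every $s$ we have $\rho_f(s)\le t_0$, preventing $\rho_f$ from tending to $\infty$.

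There is no real obstacle here: the lemma is just the observation that $\tilde{\omega}_f$ and $\tilde{\rho}_f$ are essentially the ``generalized inverses'' of $\omega_f$ and $\rho_f$ along the monotone diagonal, so finiteness of one family matches the divergence of the other. The only care required is keeping track of strict versus non-strict inequalities in the definitions of $\rho_f$ and $\tilde{\rho}_f$ (one uses $\ge$, the other $\le$), which is why the contrapositive arguments above use buffers like $M+1$ and $C+1$ rather than attempting a direct bijection between the sets being optimized over.
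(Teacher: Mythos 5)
Your proof is correct and follows essentially the same route as the paper's: both argue each direction by contrapositive, use the monotonicity of $\tilde{\omega}_f$ (resp.\ $\rho_f$) to upgrade ``does not tend to infinity'' to ``bounded,'' extract witness sequences, and absorb the strict/non-strict inequality mismatch with a $+1$ buffer. The paper writes out only part (a) and declares (b) analogous, whereas you spell out (b); otherwise the arguments coincide.
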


\begin{proof}
Note that $\tilde{\omega}$ is an increasing function, and therefore $\lim_{t\to\infty}\tilde{\omega}_f(t)=\infty$ if and only if $\tilde{\omega}_f$ is unbounded.

Suppose first that $f$ is not coarse so that by definition there exists $t\ge 0$ such that $\omega_f(t)=\infty$.
Then for each $n\in\N$ there exist $x_n,y_n \in X$ such that $d_Y(f(x_n),f(y_n)) \ge n$ and $d_X(x_n,y_n) \le t$.
This implies $\tilde{\omega}_f(n) \le t$, so $\tilde{\omega}_f$ is bounded above by $t$.

Suppose now that $\tilde{\omega}_f$ is bounded above by $t$.
Then for each $n\in\N$, there exist $x_n,y_n \in X$ such that $d_X(x_n,y_n) \le t+1$ while $d_Y(f(x_n),f(y_n)) \ge n$. This implies $\omega_f(t+1) = \infty$. That is, $f$ is not coarse.  This finishes the proof of (a), and the proof for (b) is analogous.
\end{proof}

Let us now define corresponding moduli for quantum functions.

\begin{definition}
Given a quantum function $\phi\colon \mathcal{M}\to \mathcal{N}$ between quantum metric spaces $\mathcal{M}$ and $\mathcal{N}$, we define $\tilde{\omega}_\phi$ and $\tilde{\rho}_\phi$ by
\[
\tilde{\omega}_\phi(t)=\inf\big\{\dist(\phi(P),\phi(Q)) \mid  \dist(P,Q) \geq t\big\}
\]
and
\[
\tilde{\rho}_\phi(t)= \sup\big\{ \diam( \phi(P) ) \mid  \diam(P) \le t\big\}
\]
for all $t\geq 0$, where $P,Q$ range over projections in $\mathcal{M}$.
\end{definition}

The next proposition shows that the moduli defined above generalize the classical moduli.

\begin{proposition}
\label{prop:moduli generalization}
Given metric spaces $X,Y$ and a function $f\colon X\to Y$,
$\tilde{\omega}_{\phi_f} = \tilde{\omega}_f$
and
$\tilde{\rho}_{\phi_f} = \tilde{\rho}_f$.
\end{proposition}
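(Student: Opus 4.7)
The plan is to reduce this to a direct calculation using the fact that every projection in $\ell_\infty(Y)$ (resp.\ $\ell_\infty(X)$) is of the form $\chi_S$ for some $S\subseteq Y$ (resp.\ $S\subseteq X$), together with the already recorded identities $\dist_\mathbb{V}(\chi_S,\chi_T)=d(S,T)$ (for the canonical quantum metric $\mathbb{V}$) and $\phi_f(\chi_S)=\chi_{f^{-1}(S)}$. I would also invoke the analogous identity $\diam_\mathbb{V}(\chi_S)=\diam(S)$, which follows from the same identification of projections with subsets together with Definition \ref{def:diameter}. With these identifications the quantum suprema/infima over projections in $\ell_\infty(Y)$ become suprema/infima over subsets of $Y$.

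For the equality $\tilde\omega_{\phi_f}=\tilde\omega_f$, I would prove each inequality separately. The inequality $\tilde\omega_{\phi_f}\le \tilde\omega_f$ I would get by restricting to singleton projections: for any $x,y\in X$ with $d_Y(f(x),f(y))\ge t$, take $P=\chi_{\{f(x)\}}$ and $Q=\chi_{\{f(y)\}}$, so that $\dist(P,Q)=d_Y(f(x),f(y))\ge t$, while $\phi_f(P),\phi_f(Q)$ are supported on sets containing $x$ and $y$, giving $\dist(\phi_f(P),\phi_f(Q))\le d_X(x,y)$. The reverse inequality I would get by observing that for general subsets $S,T\subseteq Y$ with $d_Y(S,T)\ge t$, every pair $x\in f^{-1}(S)$, $y\in f^{-1}(T)$ satisfies $d_Y(f(x),f(y))\ge d_Y(S,T)\ge t$, hence $d_X(x,y)\ge \tilde\omega_f(t)$; taking infimum yields $\dist(\phi_f(P),\phi_f(Q))\ge \tilde\omega_f(t)$.

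The equality $\tilde\rho_{\phi_f}=\tilde\rho_f$ I would prove in a completely parallel way: for $\ge$, given $x,y\in X$ with $d_Y(f(x),f(y))\le t$, take $P=\chi_{\{f(x),f(y)\}}$, so that $\diam(P)=d_Y(f(x),f(y))\le t$ and $\phi_f(P)$ is supported on a set containing both $x$ and $y$, giving $\diam(\phi_f(P))\ge d_X(x,y)$. For $\le$, given a subset $S\subseteq Y$ with $\diam(S)\le t$, every pair $x,y\in f^{-1}(S)$ satisfies $d_Y(f(x),f(y))\le \diam(S)\le t$, so $d_X(x,y)\le \tilde\rho_f(t)$; taking supremum gives $\diam(\phi_f(P))\le \tilde\rho_f(t)$.

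There is no real obstacle beyond bookkeeping; the main care needed is handling degenerate cases when $f^{-1}(S)$ or $f^{-1}(T)$ is empty (so that the associated projection is zero and the quantum distance is vacuously $+\infty$), and when the infima/suprema in the classical definitions are taken over an empty set. In each such case the corresponding quantum quantity is also vacuously $+\infty$ or $0$, so the equalities persist. The whole argument is little more than a translation between the pointwise and set-theoretic formulations of the moduli via the correspondence $\chi_S\leftrightarrow S$ and $\phi_f\leftrightarrow f^{-1}$.
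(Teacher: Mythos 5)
Your proposal is correct and follows essentially the same route as the paper: both reduce the statement to the identification of projections in $\ell_\infty(Y)$ with subsets $S\subseteq Y$, the formula $\phi_f(\chi_S)=\chi_{f^{-1}[S]}$, and the facts that quantum distance and diameter of indicator projections agree with the classical set distance and diameter. The paper simply asserts the resulting equality of infima and suprema, whereas you fill in the elementary two-sided inequality argument (singletons and pairs for one direction, arbitrary subsets for the other); this is just added detail, not a different method.
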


\begin{proof}
Let $P$ be a projection in $\ell_\infty(Y)$. Then $P = \chi_S$ for some $S \subseteq Y$,
and $\phi_f(P) = \chi_{f^{-1}[S]}$.
Therefore, for $t \ge 0$
\begin{multline*}
\tilde{\omega}_{\phi_f}(t)=\inf\{\dist(\chi_{f^{-1}[S]},\chi_{f^{-1}[T]}) \mid  \dist(\chi_S,\chi_T) \geq t\} \\
= \inf\{ d_X(x,y) \mid d_Y(f(x),f(y))\ge t\}
\end{multline*}
and
\begin{multline*}
\tilde{\rho}_{\phi_f}(t)= \sup\{ \diam( \chi_{f^{-1}[S]} ) \mid  \diam(\chi_S) \le t\} \\
= \sup\{ d_X(x,y) \mid d_Y(f(x),f(y))\le t\}.\qedhere
\end{multline*}

\end{proof}

Proposition \ref{prop:moduli generalization} and Lemma \ref{lem:coarse definition} justify the following definition.  Although it would perhaps be in better keeping with \cite{KuperbergWeaver} to use the terminology ``co-coarse'', there are two reasons we do not do this.  The first reason is that the inequalities involved concern only a quantum function $\phi$ and not its amplification $\phi \otimes \id$.  The second reason is that we are only exploring a notion of coarseness for functions between quantum metric spaces and not for operators inside of quantum metric spaces.

\begin{definition}
A quantum function $\phi\colon \mathcal{M}\to \mathcal{N}$ between two quantum metric spaces is called a \emph{(quantum) coarse embedding} if $\lim_{t\to\infty}\tilde{\omega}_\phi(t)=\infty$ and $\tilde{\rho}_\phi(t)<\infty$ for all $t\geq 0$.
\end{definition}

\begin{remark}
In \cite[Definition 2.27]{KuperbergWeaver}, a quantum function $\phi\colon \mathcal{M}\to \mathcal{N}$ is called a \emph{co-Lipschitz morphism} if there is some $C\geq 0$ such that
\[\dist(P,Q)\leq C\dist((\phi\otimes \id)(P),(\phi\otimes\id)(Q))\]
for all projections $P,Q \in \mathcal{M}\overline{\otimes}\mathcal{B}(\ell_2)$.  It is easily observed that if $\phi$ is a co-Lipschitz morphism, then $\tilde{\omega}_\phi (t)\geq t/C$ for all $t\geq 0$.  
\end{remark}

\begin{remark}\label{moduli-for-co-isometric-morphism}
Also in \cite[Definition 2.27]{KuperbergWeaver}, a quantum function $\phi\colon \mathcal{M}\to\mathcal{N}$ is called a \emph{co-isometric morphism} if it is surjective and
\[\dist(\tilde{P}, \tilde{Q})=\sup \big\{\dist(P,Q)\mid (\phi\otimes\id)(P)=\tilde{P}, (\phi\otimes\id)(Q)=\tilde{Q}\big\}\]
for all projections $\tilde{P},\tilde{Q}\in \mathcal{N}\overline{\otimes} \mathcal{B}(\ell_2)$.  If $\phi$ is a co-isometric morphism, then in particular, $\phi$ is a co-Lipschitz morphism with constant 1, and so $\tilde{\omega}_\phi(t)\geq t$ for all $t\geq 0$;  it may be shown that additionally $\tilde{\rho}_\phi(t)\leq t$ for all $t\geq 0$.  Indeed, if $P$ is a projection in $\mathcal{M}$, and $\tilde{Q}, \tilde{R}$ are projections in $\mathcal{N}\overline{\otimes}\mathcal{B}(\ell_2)$ such that $\tilde{Q}(\phi(P)\otimes \id)$ and $(\phi(P)\otimes \id)\tilde{R}$ are linkable, then since $\phi$ is a co-isometric morphism, by Lemma \ref{lemma:quantum-function-unlinkable},
\begin{multline*}
 \dist  (\tilde{Q},\tilde{R})
 = \sup\big\{ \dist(Q,R) \mid   (\phi \otimes \id)(Q) =  \tilde{Q}, (\phi \otimes \id)(R) = \tilde{R} \big\} \\
 \le
 \sup\big\{ \dist(Q,R) \mid  (\phi \otimes \id)\big(Q(P\otimes \id)\big) \text{ and } (\phi \otimes \id)\big((P \otimes \id)R\big) \text{ are linkable}   \big\}\\
 \le \sup \big\{\dist(Q,R)\mid Q(P\otimes \id) \text{ and } (P \otimes \id)R \text{ are linkable} \big\}\\
  = \diam(P). 
 \end{multline*}
Thus, $\diam(\phi(P))\leq \diam(P)$, and therefore $\tilde{\rho}_\phi(t)\leq t$.
\end{remark}

\section{Asymptotic dimension}\label{sec:asdim}
We will provide a definition of asymptotic dimension that can be applied generally to all quantum metric spaces.  Given the definitions that already exist for diameter and $\varepsilon$-neighborhood of a projection, we have chosen to base our generalization on Part 2 of \cite[Theorem 2.1.2]{Bedlewo}.  We do not explore generalizations of the equivalent formulations of asymptotic dimension found in \cite[Theorem 2.1.2]{Bedlewo}.
\begin{definition}
Let $\mathcal{M}$ be a quantum metric space and $\mathscr{P}$ a family of projections in $\mathcal{M}$.  We say that $\mathscr{P}$ is a \emph{cover} for $\mathcal{M}$ if $\id=\bigvee_{P\in\mathscr{P}}P$. 
We say that $\mathscr{P}$ is \emph{$r$-disjoint} if $(P)_r(Q)_r = 0$ for each $P,Q \in \mathscr{P}$ with $P \not=Q$.   We say that $\mathscr{P}$ is \emph{uniformly bounded by $R$} if $\sup_{P\in\mathscr{P}}\diam(P)\leq R$,
and that $\mathscr{P}$ is \emph{uniformly bounded} if it is uniformly bounded by some $R>0$.
\end{definition}

\begin{definition}
\label{def:asdim}
Let $\mathcal{M}$ be a quantum metric space, and let $n \in \N\cup \{0\}$. We say that $\mathcal{M}$ has \emph{asymptotic dimension less than or equal to $n$}, written as $\asdim(\mathcal{M}) \le n$, if
for every $r>0$ there exist uniformly bounded, $r$-disjoint families of projections $\mathscr{P}^0,\mathscr{P}^1, \dotsc, \mathscr{P}^n$  such that $\bigcup_{i=0}^n \mathscr{P}^i$ is a cover for  $\mathcal{M}$.  We say that $\mathcal{M}$ has \emph{asymptotic dimension equal to $n$}, written as $\asdim(\mathcal{M})=n$, if $n=\min\{m\in \mathbb{N}\cup \{0\}\mid \asdim(\mathcal{M})\leq m\}$.
\end{definition}

\begin{remark}
Let $(X,d)$ be a metric space, and consider the von Neumann algebra $\ell_\infty(X)$ endowed with the canonical quantum metric induced by $d$.
It is clear that $\asdim(X) = \asdim(\ell_\infty(X))$, since the projections in $\ell_\infty(X)$ are precisely the indicator functions of subsets of $X$.
\end{remark}

For classical metric spaces, coarse embeddings are the natural morphisms that preserve asymptotic dimension because for any $r>0$ they map every $R$-disjoint, uniformly bounded family of sets to an $r$-disjoint, uniformly bounded family of sets whenever $R>0$ is large enough.  This follows easily from the definition of coarse embedding using the moduli of expansion and compression.  We show that the same holds true for coarse embeddings between quantum metric spaces.


\begin{lemma}
\label{lemma:boundedness-under-mappings}
Let $\phi\colon \mathcal{M}\to \mathcal{N}$ be a quantum function between quantum metric spaces.  Then for any projection $P\in \mathcal{M}$,
\[\diam(\phi(P))\leq \tilde{\rho}_\phi(\diam(P)).\]
In particular, $\phi$ maps a family of projections uniformly bounded by $R$ to a family of projections uniformly bounded by $\tilde{\rho}_\phi(R)$.
\end{lemma}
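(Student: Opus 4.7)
The plan is to observe that the inequality is essentially a tautology in the definition of $\tilde{\rho}_\phi$, so the proof should be a brief definition-chase with a monotonicity observation at the end.

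First I would fix a projection $P\in\mathcal{M}$ and set $t := \diam(P)$. By the very definition
\[
\tilde{\rho}_\phi(t) = \sup\bigl\{\diam(\phi(P')) \,\bigm|\, \diam(P')\le t,\ P' \text{ a projection in }\mathcal{M}\bigr\},
\]
the projection $P$ itself is an admissible choice in this supremum, since it trivially satisfies $\diam(P)\le t$. Therefore $\diam(\phi(P))\le \tilde{\rho}_\phi(t) = \tilde{\rho}_\phi(\diam(P))$, which is the desired first inequality. (There is nothing to check about $\phi(P)$ being a projection, since $\phi$ is a unital $*$-homomorphism and hence preserves projections.)

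For the ``in particular'' part, I would record the elementary monotonicity of $\tilde{\rho}_\phi$: if $t_1 \le t_2$, then $\{P : \diam(P)\le t_1\} \subseteq \{P : \diam(P)\le t_2\}$, and so $\tilde{\rho}_\phi(t_1)\le \tilde{\rho}_\phi(t_2)$. Consequently, if $\mathscr{P}$ is a family of projections uniformly bounded by $R$, then for each $P\in\mathscr{P}$,
\[
\diam(\phi(P))\le \tilde{\rho}_\phi(\diam(P))\le \tilde{\rho}_\phi(R),
\]
so $\{\phi(P) : P\in\mathscr{P}\}$ is uniformly bounded by $\tilde{\rho}_\phi(R)$.

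There is no real obstacle in this argument; the only substantive content is recognizing that $\tilde{\rho}_\phi$ was defined precisely so that this bound holds by inspection. The work of the quantum setup (e.g.\ Lemma \ref{lemma:quantum-function-unlinkable}) is not needed here, as we are not comparing $\diam(\phi(P))$ to any external quantity other than the $\tilde{\rho}_\phi$-modulus itself.
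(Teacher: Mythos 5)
Your proof is correct and matches the paper's argument exactly: the paper's one-line proof is precisely the observation that $\diam(\phi(P))\leq \sup\{\diam(\phi(Q))\mid \diam(Q)\leq\diam(P)\}=\tilde{\rho}_\phi(\diam(P))$, i.e.\ that $P$ itself is an admissible competitor in the supremum defining $\tilde{\rho}_\phi$. Your added monotonicity remark for the ``in particular'' clause is also the intended (if unstated) justification.
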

\begin{proof}
\[\diam(\phi(P))
\leq \sup \{\diam(\phi(Q))\mid \diam(Q)\leq \diam(P)\}
=\tilde{\rho}_\phi(\diam(P)).\qedhere\]
\end{proof}

\begin{lemma}\label{lemma:r-disjointness-under-mappings}
Let $\phi\colon \mathcal{M}\to \mathcal{N}$ be a quantum function between quantum metric spaces, and let $r>0$.
Then for any projection $P \in \mathcal{M}$,
\[
\big(\phi(P) \big)_{\tilde{\omega}_\phi(r)} \le \phi\big( (P)_r \big). 
\]
In particular, $\phi$ maps an $r$-disjoint family of projections to an $\tilde{\omega}_\phi(r)$-disjoint family of projections.
\end{lemma}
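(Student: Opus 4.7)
The plan is to unravel the definition of the open neighborhood and take complements. Since $\phi$ is a normal $*$-homomorphism (as established in the proof of Lemma \ref{lemma:quantum-function-unlinkable}), it commutes with arbitrary joins of projections; indeed, it preserves finite joins by algebraic means, and extends to arbitrary joins through the sup-of-finite-joins realization combined with normality. In particular,
\[
\phi\big((P)_r\big) = \id - \bigvee\big\{\phi(Q) \mid Q \text{ a projection in } \mathcal{M} \text{ with } \dist(P,Q) \ge r\big\},
\]
so the desired inclusion $(\phi(P))_{\tilde{\omega}_\phi(r)} \le \phi((P)_r)$ becomes, after taking complements, the statement that every such $\phi(Q)$ is dominated by $\bigvee\{R \in \mathcal{N} \mid \dist(\phi(P),R) \ge \tilde{\omega}_\phi(r)\}$.

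The key step is then immediate from the definition of $\tilde{\omega}_\phi$ as an infimum: if $\dist(P,Q) \ge r$, then $\dist(\phi(P),\phi(Q))$ is one of the quantities over which the infimum defining $\tilde{\omega}_\phi(r)$ is taken, so $\dist(\phi(P),\phi(Q)) \ge \tilde{\omega}_\phi(r)$. Since $\phi(Q)$ is a projection in $\mathcal{N}$, it is a valid candidate for the join on the right-hand side, which yields the desired inequality.

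For the ``in particular'' clause, suppose $\mathscr{P}$ is an $r$-disjoint family and pick distinct $P,Q \in \mathscr{P}$, so that $(P)_r(Q)_r = 0$. The main inequality combined with the multiplicativity of $\phi$ yields
\[
(\phi(P))_{\tilde{\omega}_\phi(r)}(\phi(Q))_{\tilde{\omega}_\phi(r)} \le \phi((P)_r)\phi((Q)_r) = \phi\big((P)_r(Q)_r\big) = \phi(0) = 0,
\]
where the first step uses that $R_1 \le S_1$, $R_2 \le S_2$, and $S_1 S_2 = 0$ force $R_1 R_2 = R_1 S_1 S_2 R_2 = 0$. The only nontrivial ingredient in the whole argument is the preservation of infinite joins of projections by $\phi$, which is just a repackaging of the normality already invoked in Lemma \ref{lemma:quantum-function-unlinkable}; everything else is a direct unwinding of the definitions.
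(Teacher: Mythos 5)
Your proposal is correct and follows essentially the same route as the paper's proof: normality lets $\phi$ pass through the join defining $(P)_r$, the definition of $\tilde{\omega}_\phi$ as an infimum turns $\dist(P,Q)\ge r$ into $\dist(\phi(P),\phi(Q))\ge\tilde{\omega}_\phi(r)$, and enlarging the index set of the resulting join yields the inequality. The only difference is that you spell out the ``in particular'' clause explicitly, which the paper leaves to the reader.
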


\begin{proof}
Since $\phi$ is a quantum function and $\dist(P,Q) \ge r$ implies $\dist(\phi(P),\phi(Q)) \ge \tilde{\omega}_\phi(r)$, we have 
\begin{align*}
\phi\big( (P)_r \big) &= \phi\big( \id_{\mathcal{M}} -  \bigvee \big\{ Q\in\mathcal{M} \mid \dist(P,Q) \ge r \big\} \big) \\
&= \id_{\mathcal{N}} -  \bigvee \big\{ \phi(Q) \mid \dist(P,Q) \ge r \big\} \\
&\ge \id_{\mathcal{N}} -  \bigvee \big\{ \phi(Q) \mid \dist(\phi(P),\phi(Q)) \ge \tilde{\omega}_\phi(r) \big\} \\
&\ge \id_{\mathcal{N}} -  \bigvee \big\{ R\in \mathcal{N} \mid \dist(\phi(P),R) 
\ge \tilde{\omega}_\phi(r) \big\} \\
&= \big(\phi(P) \big)_{\tilde{\omega}_\phi(r)}.\qedhere
\end{align*}
\end{proof}

The next theorem follows immediately.  Note that a quantum function is unital and so it maps covers to covers.

\begin{theorem}\label{thm:coarse-embeddings}
Let $\phi\colon \mathcal{M}\to \mathcal{N}$ be a quantum coarse embedding between quantum metric spaces.
Then $\asdim(\mathcal{N}) \le \asdim(\mathcal{M})$.
\end{theorem}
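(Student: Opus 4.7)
The plan is to show that an asymptotic dimension $\le n$ witness in $\mathcal{M}$ can be pushed forward by $\phi$ to an asymptotic dimension $\le n$ witness in $\mathcal{N}$; the two preceding lemmas already handle the two quantitative ingredients (uniform boundedness and $r$-disjointness), so the only remaining issue is the cover property. Note that the direction of the inequality is the correct one, because quantum functions are contravariant with respect to their classical counterparts.

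Suppose $\asdim(\mathcal{M}) \le n$ (else there is nothing to prove). Fix $r>0$. Since $\phi$ is a quantum coarse embedding, $\lim_{t\to\infty}\tilde{\omega}_\phi(t) = \infty$, so I can choose $R>0$ with $\tilde{\omega}_\phi(R)\ge r$. Applying the definition of $\asdim(\mathcal{M})\le n$ at scale $R$, pick uniformly bounded, $R$-disjoint families $\mathscr{P}^0,\dotsc,\mathscr{P}^n$ of projections in $\mathcal{M}$ whose union covers $\mathcal{M}$, and let $S$ be a common uniform bound on their diameters. Set
\[
\mathscr{Q}^i = \big\{ \phi(P) \mid P \in \mathscr{P}^i \big\} \subseteq \mathcal{N}, \qquad i=0,\dotsc,n.
\]

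By Lemma \ref{lemma:boundedness-under-mappings}, each $\mathscr{Q}^i$ is uniformly bounded by $\tilde{\rho}_\phi(S)$, which is finite because $\phi$ is a quantum coarse embedding. By Lemma \ref{lemma:r-disjointness-under-mappings}, each $\mathscr{Q}^i$ is $\tilde{\omega}_\phi(R)$-disjoint, hence $r$-disjoint by the choice of $R$ (and the fact that $(\cdot)_s$ is monotone in $s$ — a larger neighborhood being trivial is a weaker condition than a smaller one being trivial; more precisely, $(P)_s \ge (P)_{s'}$ for $s \le s'$, so $r$-disjointness follows from $\tilde{\omega}_\phi(R)$-disjointness).

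The only point that requires any genuine argument is that $\bigcup_i \mathscr{Q}^i$ covers $\mathcal{N}$. Here I would invoke that $\phi$, being a weak*-continuous $*$-homomorphism, is normal (as was already used in the proof of Lemma \ref{lemma:quantum-function-unlinkable} via \cite[Prop. III.2.2.2]{Blackadar}), and hence preserves arbitrary suprema of increasing nets of projections — in particular, joins of projections. Combined with $\phi$ being unital, this gives
\[
\id_{\mathcal{N}} = \phi(\id_{\mathcal{M}}) = \phi\Big(\bigvee_{i=0}^n \bigvee_{P \in \mathscr{P}^i} P \Big) = \bigvee_{i=0}^n \bigvee_{P \in \mathscr{P}^i} \phi(P) = \bigvee_{Q \in \bigcup_i \mathscr{Q}^i} Q,
\]
so $\bigcup_i \mathscr{Q}^i$ is a cover for $\mathcal{N}$. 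Since $r$ was arbitrary, $\asdim(\mathcal{N}) \le n$, completing the proof. I do not anticipate any serious obstacle here; the theorem is essentially the assembly of the two pushforward lemmas plus the normality remark.
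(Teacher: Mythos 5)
Your proof is correct and follows essentially the same route as the paper, which simply assembles Lemmas \ref{lemma:boundedness-under-mappings} and \ref{lemma:r-disjointness-under-mappings} with the observation that a unital normal $*$-homomorphism maps covers to covers. The only blemish is your parenthetical monotonicity claim, which is stated backwards: neighborhoods grow with the radius, i.e.\ $(P)_s \le (P)_{s'}$ for $s \le s'$, and it is precisely this (correct) direction that makes $\tilde{\omega}_\phi(R)$-disjointness with $\tilde{\omega}_\phi(R) \ge r$ imply $r$-disjointness.
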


As a consequence of Theorem \ref{thm:coarse-embeddings}, asymptotic dimension plays well with the quotient \cite[Definition 2.35]{KuperbergWeaver} and direct sum \cite[Definition 2.32 (b)]{KuperbergWeaver} constructions for quantum metric spaces.  Compare this to the corresponding results on subspaces and (disjoint) unions of classical metric spaces found in \cite[Proposition 2.2.6]{Bedlewo} and \cite[Corollary 2.3.3]{Bedlewo}.
Note also that these results are related to some of the conditions required of an abstract dimension theory for a class of $C^*$-algebras from \cite{Thiel}.

\begin{corollary}
Let $\mathcal{M}$ and $\mathcal{N}$ be quantum metric spaces.
\begin{enumerate}[(a)]
\item\label{permanence:quotient}
If $\mathcal{N}$ is a metric quotient of $\mathcal{M}$, then $\asdim(\mathcal{N}) \le \asdim(\mathcal{M})$.
\item\label{permanence:sum}
$\asdim( \mathcal{M} \oplus \mathcal{N} ) = \max \{ \asdim(\mathcal{M}), \asdim(\mathcal{N}) \}$.
\end{enumerate}
\end{corollary}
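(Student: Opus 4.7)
The plan for (\ref{permanence:quotient}) is to observe that a metric quotient map is co-isometric, hence a quantum coarse embedding, and then invoke Theorem \ref{thm:coarse-embeddings}. By \cite[Definition 2.35]{KuperbergWeaver}, the assumption that $\mathcal{N}$ is a metric quotient of $\mathcal{M}$ supplies a co-isometric morphism $\phi\colon\mathcal{M}\to\mathcal{N}$. Remark \ref{moduli-for-co-isometric-morphism} then gives $\tilde{\omega}_\phi(t)\ge t$ and $\tilde{\rho}_\phi(t)\le t$ for every $t\ge 0$, so $\phi$ is a quantum coarse embedding; Theorem \ref{thm:coarse-embeddings} yields $\asdim(\mathcal{N})\le\asdim(\mathcal{M})$.

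For (\ref{permanence:sum}), I would prove the two inequalities separately. The lower bound $\max\{\asdim(\mathcal{M}),\asdim(\mathcal{N})\}\le\asdim(\mathcal{M}\oplus\mathcal{N})$ reduces to part (\ref{permanence:quotient}) once one checks from \cite[Definitions 2.32(b) and 2.35]{KuperbergWeaver} that the canonical coordinate projections $\mathcal{M}\oplus\mathcal{N}\to\mathcal{M}$ and $\mathcal{M}\oplus\mathcal{N}\to\mathcal{N}$ are metric quotients. This is the noncommutative analogue of the elementary fact that each summand isometrically embeds into a classical disjoint union of metric spaces.

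For the upper bound, assume $\asdim(\mathcal{M}),\asdim(\mathcal{N})\le n$. Given $r>0$, I would pick uniformly bounded $r$-disjoint covering families $\mathscr{P}^0_{\mathcal{M}},\dotsc,\mathscr{P}^n_{\mathcal{M}}$ of $\mathcal{M}$ and $\mathscr{P}^0_{\mathcal{N}},\dotsc,\mathscr{P}^n_{\mathcal{N}}$ of $\mathcal{N}$, and assemble
\[
\mathscr{P}^i=\big\{P\oplus 0\mid P\in\mathscr{P}^i_{\mathcal{M}}\big\}\cup\big\{0\oplus Q\mid Q\in\mathscr{P}^i_{\mathcal{N}}\big\}
\]
for $i=0,\dotsc,n$, identifying embedded projections with their images in $\mathcal{M}\oplus\mathcal{N}$. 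That $\bigcup_{i=0}^n\mathscr{P}^i$ covers $\mathcal{M}\oplus\mathcal{N}$ is immediate from the fact that the $\mathscr{P}^i_{\mathcal{M}}$'s and the $\mathscr{P}^i_{\mathcal{N}}$'s each cover their respective summands. The substance, and the step I expect to be the main obstacle, is verifying that each $\mathscr{P}^i$ is $r$-disjoint and uniformly bounded in the direct-sum quantum metric of \cite[Definition 2.32(b)]{KuperbergWeaver}. Concretely, one must unpack that definition to confirm (i) $\diam(P\oplus 0)=\diam_{\mathcal{M}}(P)$ and $\diam(0\oplus Q)=\diam_{\mathcal{N}}(Q)$, and (ii) the $r$-neighborhood of a projection embedded from one summand stays supported in that summand. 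Granting these two facts, the $r$-disjointness of $\mathscr{P}^i$ follows from that of $\mathscr{P}^i_{\mathcal{M}}$ and $\mathscr{P}^i_{\mathcal{N}}$ together with the automatic orthogonality between the two summands, and the uniform bound $\max\{R_{\mathcal{M}},R_{\mathcal{N}}\}$ on diameters is then immediate.
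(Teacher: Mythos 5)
Your proposal is correct and follows essentially the same route as the paper: part (a) via the co-isometric morphism coming from the metric quotient, Remark \ref{moduli-for-co-isometric-morphism}, and Theorem \ref{thm:coarse-embeddings}; part (b) via the quotient maps onto the summands for the lower bound and the union of embedded covering families for the upper bound. The two facts you flag as the remaining obstacle (diameters and neighborhoods of embedded projections being computed summand-wise) are exactly what the paper disposes of by citing \cite[Proposition 2.34]{KuperbergWeaver}, which gives $(P\oplus 0)_r=(P)_r\oplus 0$ and $(0\oplus Q)_r=0\oplus (Q)_r$.
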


\begin{proof}
\eqref{permanence:quotient}: This follows immediately from Theorem \ref{thm:coarse-embeddings} and Remark \ref{moduli-for-co-isometric-morphism} of this paper and \cite[Corollary 2.37]{KuperbergWeaver}.

\eqref{permanence:sum}: 
Since each of $\mathcal{M}$ and $\mathcal{N}$ is a metric quotient of $\mathcal{M} \oplus \mathcal{N} $, we have $\asdim( \mathcal{M} \oplus \mathcal{N} ) \geq \max \{ \asdim(\mathcal{M}), \asdim(\mathcal{N}) \}$ from part \eqref{permanence:quotient}.
Now let $n = \max \{ \asdim(\mathcal{M}), \asdim(\mathcal{N}) \}$ and take any $r>0$.
By Definition \ref{def:asdim}, there exist uniformly bounded, $r$-disjoint families of projections $\mathscr{P}^0,\mathscr{P}^1, \dotsc, \mathscr{P}^n$  such that $\bigcup_{i=0}^n \mathscr{P}^i$ is a cover for  $\mathcal{M}$, and there also exist uniformly bounded, $r$-disjoint families of projections $\mathscr{Q}^0,\mathscr{Q}^1, \dotsc, \mathscr{Q}^n$  such that $\bigcup_{i=0}^n \mathscr{Q}^i$ is a cover for  $\mathcal{N}$.
For $1\leq j\leq n$, define $\mathscr{R}^j = \{ P \oplus 0 \mid P \in \mathscr{P}^j\} \cup \{ 0 \oplus Q \mid Q \in \mathscr{Q}^j\}$.
It is clear that each $\mathscr{R}^j$ is uniformly bounded, and moreover the union $\bigcup_{i=0}^n \mathscr{R}^i$ is a cover for $\mathcal{M} \oplus \mathcal{N}$. Additionally, each family $\mathscr{R}^j$ is $r$-disjoint, since $(P \oplus 0)_r = (P)_r \oplus 0$ and $(0 \oplus Q)_r = 0 \oplus (Q)_r$, by \cite[Proposition 2.34]{KuperbergWeaver}.
Therefore, $\asdim(\mathcal{M} \oplus \mathcal{N}) \le n$.
\end{proof}

An analog of the result concerning asymptotic dimension of possibly nondisjoint unions of classical metric spaces \cite[Corollary 2.3.3]{Bedlewo} can also be established, at least for reflexive quantum metric spaces.  In a reflexive quantum metric space $\mathcal{M}$, diameters of projections in $\mathcal{M}$ may be computed using only projections in $\mathcal{M}$.  

\begin{lemma}
\label{lem:reflexive_diameter}
Let $(\mathcal{M}, \{\mathcal{V}_t\}_{t\in [0,\infty)})$ be a reflexive quantum metric space and let $P$ be a nonzero projection in $\mathcal{M}$.  Then
\begin{align*}
\diam (P)&=\sup \{\dist (Q,R)\mid QPAPR\neq 0 \mbox{ for some }A\in \mathcal{B}(\mathcal{H})\}\\
&=\sup\{\dist (Q,R)\mid QP,RP\neq 0\}
\end{align*}
\end{lemma}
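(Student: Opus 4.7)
The plan is to split the three-way equality into an elementary part (the equivalence of the two right-hand suprema and the easy inequality) and a reflexivity-driven part (the hard inequality). For the elementary part, I would first show that for projections $Q, R \in \mathcal{M}$, the existence of $A \in \mathcal{B}(\mathcal{H})$ with $QPAPR \neq 0$ is equivalent to $QP \neq 0$ and $RP \neq 0$. The forward direction is immediate; for the converse, pick vectors $\phi, \psi \in \mathcal{H}$ with $QP\phi \neq 0$ and $RP\psi \neq 0$ and take $A = \ket{\phi}\bra{\psi}$, so that $QPAPR = \ket{QP\phi}\bra{RP\psi}$ is a nonzero rank-one operator. The inequality $\diam(P) \geq$ middle supremum then follows by identifying such $Q, R \in \mathcal{M}$ with projections $Q\otimes\id, R\otimes\id \in \mathcal{M}\overline{\otimes}\mathcal{B}(\ell_2)$: one checks that $(Q\otimes\id)(PAP\otimes\id)(R\otimes\id) = QPAPR\otimes\id \neq 0$ and $\dist(Q, R) = \dist(Q\otimes\id, R\otimes\id)$.

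The reverse inequality is where reflexivity enters, and the main technical step is to upgrade the stated reflexivity---formulated using projections in $\mathcal{B}(\mathcal{H})$---to a characterization using only projections in $\mathcal{M}$:
\[
\mathcal{V}_t = \{B \in \mathcal{B}(\mathcal{H}) : QBR = 0 \text{ for all projections } Q, R \in \mathcal{M} \text{ with } Q\mathcal{V}_t R = \{0\}\}.
\]
The key observation is that $\mathcal{V}_t$ is an $\mathcal{M}'$-bimodule (since $\mathcal{V}_0 = \mathcal{M}'$ and $\mathcal{V}_0\mathcal{V}_t, \mathcal{V}_t\mathcal{V}_0 \subseteq \mathcal{V}_t$), which implies that whenever $P_1\mathcal{V}_t P_2 = \{0\}$ for projections $P_1, P_2 \in \mathcal{B}(\mathcal{H})$, the same holds with $P_i$ replaced by the smallest $\mathcal{M}$-projection $P_i^\mathcal{M}$ dominating $P_i$---namely the projection onto $\overline{\mathcal{M}'P_i\mathcal{H}}$. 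Indeed, for $B \in \mathcal{V}_t$ and $X', Y' \in \mathcal{M}'$, the product $Y'BX'$ stays in $\mathcal{V}_t$, so $P_1 Y'BX' P_2 = 0$; taking strong limits of $Y'P_1\chi$ and $X'P_2\psi$ pushes $P_1, P_2$ up to $P_1^\mathcal{M}, P_2^\mathcal{M}$.

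Having established the refined reflexivity, I argue by contradiction. Suppose $\diam(P) > D$, where $D$ denotes the middle supremum. Then there exist projections $Q_0, R_0 \in \mathcal{M}\overline{\otimes}\mathcal{B}(\ell_2)$ and $A_0 \in \mathcal{B}(\mathcal{H})$ with $Q_0(PA_0P\otimes\id)R_0 \neq 0$ and $\dist(Q_0, R_0) > D$; pick $t$ with $D < t < \dist(Q_0, R_0)$. Any projections $Q, R \in \mathcal{M}$ with $Q\mathcal{V}_t R = \{0\}$ then satisfy $\dist(Q, R) \geq t > D$, forcing $QP = 0$ or $RP = 0$; either way $Q(PA_0P)R = 0$. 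By the refined reflexivity, $PA_0P \in \mathcal{V}_t$, and therefore $Q_0(PA_0P\otimes\id)R_0 = 0$, contradicting our choice of $Q_0, R_0$. The main obstacle is the refined reflexivity step---transferring the defining property from $\mathcal{B}(\mathcal{H})$-projections down to $\mathcal{M}$-projections via bimodularity---once it is in place, the contradiction is immediate.
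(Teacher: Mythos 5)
Your proof is correct and follows essentially the same route as the paper: the easy inequality comes straight from the definition of diameter (with the two right-hand suprema identified via the rank-one operator trick), and the hard inequality comes from the fact that, for a reflexive quantum metric, membership in $\mathcal{V}_t$ can be tested against projections in $\mathcal{M}$ alone, so that a pair witnessing $\diam(P)>D$ would force $PA_0P\in\mathcal{V}_t$ and yield a contradiction. The only difference is that the paper cites this reflexivity-transfer step as a known result of Kuperberg--Weaver (their Proposition 2.24), whereas you re-derive it from the $\mathcal{M}'$-bimodule property of $\mathcal{V}_t$ and the passage to the $\mathcal{M}$-projections onto $\overline{\mathcal{M}'P_i\mathcal{H}}$; that derivation is sound.
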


\begin{proof}
It is clear from Definition \ref{def:diameter} that 
\[\diam (P)\geq\sup \{\dist (Q,R)\mid QPAPR\neq 0 \mbox{ for some }A\in \mathcal{B}(\mathcal{H})\}.\]
The result is then trivial when $\diam (P)=0$.  So suppose $\diam (P)\neq 0$ and take
any $0<\varepsilon<\diam (P)$.  Let $Q,R$ be any projections in $\mathcal{M}\otimes \mathcal{B}(\mathcal{H})$ such that $\dist (Q,R)>\diam (P)-\varepsilon$ while $Q(PAP\otimes \id)R\neq 0$ for some $A\in \mathcal{B}(\mathcal{H})$.  By \cite[Proposition 2.10]{KuperbergWeaver}, $PAP\notin \mathcal{V}_{\diam (P)-\varepsilon}$.  But then by \cite[Proposition 2.24]{KuperbergWeaver}, there exist projections $Q',R'\in \mathcal{M}$ such that $\dist (Q',R')\geq \diam (P)-\varepsilon$ while $QPAPR\neq 0$.  As $\varepsilon>0$ was arbitrary, it follows that 
\[\diam (P)\leq\sup \{\dist (Q,R)\mid QPAPR\neq 0 \mbox{ for some }A\in \mathcal{B}(\mathcal{H})\}.\qedhere\]
\end{proof}

\begin{lemma}\label{lemma:r-saturated-union}
Let $\mathcal{M}$ be a reflexive quantum metric space, let $P,Q,R$ be projections in $\mathcal{M}$, and take any $r,s>0$. Then:
\begin{enumerate}[(a)]
\item $(P)_rQ \not=0 \iff  \dist(P,Q) < r$.
\item $\dist(Q,R) \le \dist(Q,P) +\dist(R,P) + \diam(P)$.
\item $\diam\big( (P)_r \big) \le \diam(P)+2r$.
\item $\diam( P \vee Q ) \le \dist(P,Q)+\diam(P)+\diam(Q)$.
\item $(P)_r(Q)_s\neq 0\implies \diam (P\vee Q)\leq \diam(P)+\diam(Q)+2(r+s)$.
\end{enumerate}
\end{lemma}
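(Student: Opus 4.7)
The plan is to prove parts (a)--(e) in order, since each part feeds into the next. Two recurring tools are: (i) the non-vanishing principle that $P'P \ne 0 \Rightarrow \dist(P',P) = 0$, obtained by taking $A = \id \in \mathcal{V}_0$ in Definition \ref{def:distance}; and (ii) reflexivity via Lemma \ref{lem:reflexive_diameter}, which lets us compute $\diam(P) = \sup\{\dist(Q,R) \mid QP, RP \ne 0\}$ using only projections $Q, R \in \mathcal{M}$.

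For (a), unfolding Definition \ref{def:neighborhood} gives $(P)_r Q = 0 \iff Q \le \bigvee\{R \in \mathcal{M} \mid \dist(P,R) \ge r\}$. The forward direction is immediate, since $Q$ itself lies in that set when $\dist(P,Q) \ge r$. For the reverse, the key auxiliary identity is
\[
\dist\bigl(P, \textstyle\bigvee_i R_i\bigr) = \inf_i \dist(P, R_i),
\]
which I would prove by noting that $PA(\bigvee_i R_i) \ne 0$ forces $PAR_i \ne 0$ for some $i$ (the range of $\bigvee_i R_i$ is the closed span of the ranges of the $R_i$, and $PA$ is continuous); combined with monotonicity of $\dist$ under $\le$, this closes (a). For (b), I iterate the quantum triangle inequality from \cite[Proposition 2.8]{KuperbergWeaver} (taking intermediate projections in $\mathcal{M}$, as permitted by the remark after Definition \ref{def:distance}). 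A first application yields $\dist(Q,R) \le \dist(Q,P) + \sup\{\dist(\tilde P, R) \mid P \tilde P \ne 0\}$; applying the triangle inequality a second time to each $\dist(\tilde P, R) = \dist(R,\tilde P)$, again with $P$ as the intermediate, produces the upper bound $\dist(R, P) + \sup\{\dist(\hat P, \tilde P) \mid P \hat P \ne 0\}$, and tool (ii) bounds this inner supremum by $\diam(P)$. Part (c) then follows quickly: Lemma \ref{lem:reflexive_diameter} writes $\diam((P)_r)$ as a sup of $\dist(Q,R)$ over projections with $Q(P)_r, R(P)_r \ne 0$; part (a) converts this into $\dist(P,Q), \dist(P,R) < r$, and (b) closes the loop.

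For (d), Lemma \ref{lem:reflexive_diameter} again reduces the problem to bounding $\dist(Q',R')$ when $Q'(P \vee Q) \ne 0$ and $R'(P \vee Q) \ne 0$; any such $Q'$ must satisfy $Q'P \ne 0$ or $Q'Q \ne 0$ (otherwise $Q' \le (\id - P) \wedge (\id - Q) = \id - (P \vee Q)$), and a short case analysis using (b) together with tool (i) gives the claimed bound. For (e), diameter is monotone under $\le$ (a direct consequence of Lemma \ref{lem:reflexive_diameter}), so $\diam(P \vee Q) \le \diam((P)_r \vee (Q)_s)$; applying (d) to $(P)_r \vee (Q)_s$, invoking (c) to bound $\diam((P)_r)$ and $\diam((Q)_s)$, and using the hypothesis $(P)_r (Q)_s \ne 0$ together with tool (i) to conclude $\dist((P)_r, (Q)_s) = 0$, yields (e). The main technical obstacle is the join--distance identity used in (a); once it is in hand, the remainder of the argument is careful bookkeeping with iterated triangle inequalities and the reflexive characterization of diameter.
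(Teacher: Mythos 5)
Your proof is correct and follows essentially the same route as the paper's: part (a) via the identity $\dist(P,\bigvee_i R_i)=\inf_i \dist(P,R_i)$ applied to $\id-(P)_r$, part (b) by iterating the quantum triangle inequality with $P$ as the intermediate projection, and parts (c)--(e) by combining the earlier parts with the reflexive characterization of diameter from Lemma \ref{lem:reflexive_diameter} and the observation that intersecting projections are at distance zero. The only cosmetic difference is that you sketch a proof of the join--distance identity from scratch, whereas the paper invokes it implicitly.
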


\begin{proof}
(a) The implication $\implies$ follows immediately from Definition \ref{def:neighborhood}.
So suppose $Q$ is such that $\dist(P,Q) < r$ and furthermore, that $(P)_rQ = 0$.  Then
\[
Q \le \id - (P)_r = \bigvee\{Q'\in \mathcal{M} \mid \dist(P,Q') \ge r\} 
\]
and thus
\[
r > \dist(P,Q) \ge \dist(P, \id-(P)_r) = \inf\{ \dist(P,Q') \mid \dist(P,Q') \ge r \} \ge r,
\]
a contradiction.  Therefore $(P)_rQ \neq 0$ if $\dist(P,Q)<r$.

(b) By the remark after Definition \ref{def:distance}, with $S$ ranging over projections in $\mathcal{M}$,
\begin{align*}
\dist(Q&,R)\\
&\leq \dist (Q,P)+\sup \{\dist (S,R)\mid PS\neq 0\}\\
&\leq \dist (Q,P)+\dist (R,P)+\sup\{\dist (S,S')\mid PS\neq 0, PS'\neq 0\}\\
&\leq \dist (Q,P)+\dist (R,P)+\diam(P),
\end{align*}
where the last inequality follows from the fact that $SPAPS'\neq 0$ for some $A\in\mathcal{B}(\mathcal{H})$ whenever $SP, PS'\neq 0$.

(c) Suppose $S,S'$ are projections in $\mathcal{M}$ such that $(P)_rS\neq 0$ and $(P)_rS'\neq 0$.  By parts (a) and (b), this means
$\dist (S,S')\leq \diam (P)+2r$.  As $S,S'$ were arbitrary, Lemma \ref{lem:reflexive_diameter} implies that $\diam ((P)_r)\leq \diam (P)+2r$.

(d) Suppose $S,S'$ are projections in $\mathcal{M}$ such that $S(P\vee Q)\neq 0$ and $S'(P\vee Q) \not=0$.  If $SP \not=0$ and $S'P\not=0$, then $\dist(S,S') \le \diam(P)$.  If $SQ \not=0$ and $S'Q\not=0$, then $\dist(S,S') \le \diam(Q)$.  Finally, if $SP \not=0$ and $S'Q\not=0$, then by part (b),
\begin{align*}
\dist(S,S') &\le \dist(S,Q) + \dist (Q,S')+\diam(Q)\\
&\le \dist (S,P)+\dist (P,Q)+ \diam (P) + \diam(Q)\\
&= \dist (P,Q)+\diam(P) + \diam(Q).
\end{align*}
The same inequality holds if $SQ\neq 0$ and $S'P\neq 0$.  As $S,S'$ were arbitrary, Lemma \ref{lem:reflexive_diameter} implies that $\diam(P\vee Q)\leq \dist(P,Q) +\diam(P) + \diam(Q)$.

(e) By parts (c) and (d)
\begin{align*}
    \diam (P\vee Q)&\leq \diam ((P)_r\vee (Q)_s)\\
    &\leq \dist ((P)_r,(Q)_s)+\diam ((P)_r)+\diam((Q)_s)\\
    &\leq \diam (P)+\diam (Q)+2(r+s).\qedhere
\end{align*}
\end{proof}

The following is a direct adaptation of  \cite[Prop. 2.3.1]{Bedlewo} for reflexive quantum metric spaces.

\begin{proposition}
\label{prop:r-saturated union}
Let $\mathcal{M}$ be a reflexive quantum metric space, and let $\mathscr{P}, \mathscr{Q}$ be families of projections in $\mathcal{M}$.  Fix $r>0$ and for each $Q\in \mathscr{Q}$, let 
\[
    \mathscr{P}_Q=\{P\in \mathscr{P}\mid (P)_r(Q)_r \not=0 \}\mbox{ and  }
    P_Q=\bigvee_{P\in\mathscr{P}_Q}P.\]
Suppose that $\mathscr{P}$ is $r$-disjoint and $R$-bounded with $R > r$,
and $\mathscr{Q}$ is $7R$-disjoint and $D$-bounded.
Then $\mathscr{Q} \cup_r \mathscr{P}$ is $r$-disjoint and $(D + 2(R+D+4r))$-bounded, where
\[\mathscr{Q} \cup_r \mathscr{P} = \left\{ Q \vee P_Q  \mid Q \in \mathscr{Q}  \right\}
 \cup \big\{ P \in \mathscr{P} \mid (P)_r(Q)_r = 0 \text{ for all } Q \in \mathscr{Q}  \big\}.\]
\end{proposition}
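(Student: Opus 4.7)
The plan is to split the argument into two parts: first the diameter bound, then the $r$-disjointness condition. Both rest on two auxiliary facts that I establish first. The first is a \emph{distance-over-join formula}: for any family $\{A_i\}$ of projections in $\mathcal{M}$ and any projection $Z$,
\[
\dist\bigl(\bigvee_i A_i, Z\bigr) = \inf_i \dist(A_i, Z).
\]
This follows from the observation that $(\bigvee_i A_i) T Z \neq 0$ if and only if $A_i T Z \neq 0$ for some $i$, which is immediate because the range of $\bigvee_i A_i$ is the closure of the sum of the ranges of the $A_i$. The second is a \emph{containment lemma}: if $P \in \mathscr{P}_Q$ then $P \leq (Q)_{2R+3r}$. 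From $(P)_r(Q)_r \neq 0$, Lemma \ref{lemma:r-saturated-union}(a) gives $\dist(Q, (P)_r) < r$, and then parts (b) and (c) of the same lemma yield
\[
\dist(Q, P) \leq \dist(Q, (P)_r) + \dist(P, (P)_r) + \diam((P)_r) < r + 0 + (R + 2r) = R + 3r.
\]
Moreover, whenever $\dist(Q, P) + \diam(P) < t$, any projection $Z$ with $\dist(Q,Z) \geq t$ must satisfy $PZ = 0$ (otherwise Lemma \ref{lemma:r-saturated-union}(b) would force $\dist(Q,Z) < t$), whence $P$ is orthogonal to the join of all such $Z$ and $P \leq (Q)_t$ by Definition \ref{def:neighborhood}. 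Taking $t = 2R + 3r$ (possible since $\dist(Q,P) + R < 2R + 3r$) gives the claim.

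For the diameter bound, reflexivity and Lemma \ref{lem:reflexive_diameter} reduce the task to bounding $\dist(S, S')$ for projections $S, S' \in \mathcal{M}$ with $S(Q\vee P_Q) \neq 0$ and $S'(Q\vee P_Q) \neq 0$. By the range observation above, either $SQ \neq 0$ or $SP \neq 0$ for some $P \in \mathscr{P}_Q$. In the first subcase $\dist(S, Q) = 0$; in the second, Lemma \ref{lemma:r-saturated-union}(e) gives $\diam(P \vee Q) \leq R + D + 4r$, and Lemma \ref{lem:reflexive_diameter} yields $\dist(S, Q) \leq \diam(P\vee Q) \leq R + D + 4r$. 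Either way $\dist(S,Q), \dist(S',Q) \leq R+D+4r$, and Lemma \ref{lemma:r-saturated-union}(b) gives
\[
\dist(S, S') \leq \dist(S, Q) + \dist(S', Q) + \diam(Q) \leq 2(R+D+4r) + D.
\]
The remaining elements of $\mathscr{Q} \cup_r \mathscr{P}$ are projections in $\mathscr{P}$ of diameter at most $R$, so the total bound $D + 2(R+D+4r)$ holds uniformly.

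For $r$-disjointness I argue by cases on a pair of distinct elements of $\mathscr{Q} \cup_r \mathscr{P}$. A pair of the form $P, P' \in \mathscr{P}$ is already $r$-disjoint by hypothesis on $\mathscr{P}$. For a pair $X = Q \vee P_Q$ and $Y = Q' \vee P_{Q'}$ with $Q \neq Q'$: the containment lemma together with the monotonicity of $(\cdot)_r$ and the subadditivity $((P)_\varepsilon)_\delta \leq (P)_{\varepsilon+\delta}$ noted after Definition \ref{def:neighborhood} yields $(X)_r \leq ((Q)_{2R+3r})_r \leq (Q)_{3R+4r} \leq (Q)_{7R}$ (the last because $r \leq R$), and symmetrically $(Y)_r \leq (Q')_{7R}$; the $7R$-disjointness of $\mathscr{Q}$ then forces $(X)_r(Y)_r = 0$. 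For the mixed case $X = Q \vee P_Q$ and $Y = P$ where $P$ satisfies $(P)_r(Q'')_r = 0$ for every $Q'' \in \mathscr{Q}$ (so in particular $P \notin \mathscr{P}_Q$, hence $P \neq P'$ for every $P' \in \mathscr{P}_Q$), the distance-over-join formula gives
\[
\dist(X, (P)_r) = \min\Bigl\{\dist(Q, (P)_r),\ \inf_{P' \in \mathscr{P}_Q}\dist(P', (P)_r)\Bigr\} \geq r,
\]
both terms being $\geq r$ via Lemma \ref{lemma:r-saturated-union}(a), using the hypothesis on $Y$ for the first and the $r$-disjointness of $\mathscr{P}$ for the second. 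Another application of Lemma \ref{lemma:r-saturated-union}(a) then yields $(X)_r(Y)_r = 0$.

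The hard part will be the distance-over-join formula: it is the quantum analogue of the classical identity that the $r$-neighborhood of a union is the union of the $r$-neighborhoods, and it is essential for the mixed case, where neither element is contained in the neighborhood of any single $Q$. Once that formula is in place, everything else is a careful bookkeeping of the triangle inequality and the containment lemma.
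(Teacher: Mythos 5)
Your proof is correct. The diameter estimate is essentially the paper's own argument: both bound $\dist(S,Q)$ by $\diam(P\vee Q)\le R+D+4r$ using Lemma \ref{lem:reflexive_diameter} and Lemma \ref{lemma:r-saturated-union}(e), and then close with Lemma \ref{lemma:r-saturated-union}(b). Where you genuinely diverge is the $r$-disjointness of two saturated elements $Q\vee P_Q$ and $Q'\vee P_{Q'}$: the paper runs a case analysis, showing $(P)_r(Q')_r=0$ and $(P)_r(P')_r=0$ for $P\in\mathscr{P}_Q$, $P'\in\mathscr{P}_{Q'}$ by deriving quantitative contradictions ($\dist(Q,Q')<5R$, resp. $\dist(Q,(Q')_r)<7R$) with the $7R$-disjointness of $\mathscr{Q}$, whereas you prove the containment $Q\vee P_Q\le (Q)_{2R+3r}$ and push it through the subadditivity $((Q)_\varepsilon)_\delta\le (Q)_{\varepsilon+\delta}$ to get $(Q\vee P_Q)_r\le (Q)_{2R+4r}\le (Q)_{7R}$, after which disjointness is immediate. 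This is cleaner, eliminates the case analysis, and in fact shows that the hypothesis ``$7R$-disjoint'' could be weakened to ``$6R$-disjoint'' in this argument. Your distance-over-join formula $\dist(\bigvee_i A_i,Z)=\inf_i\dist(A_i,Z)$ is exactly the content of the identity $(R\vee S)_r=(R)_r\vee (S)_r$ that the paper invokes without proof in the mixed case, and your one-line justification is valid because for projections in $\mathcal{M}$ the distance is computed from the condition $PAQ\neq 0$, which for $P=\bigvee_i A_i$ holds iff it holds for some $A_i$. Two facts you use silently but which are true and easy: $Q\le (Q)_{2R+3r}$ (needed so that the whole join $Q\vee P_Q$, not just $P_Q$, sits under $(Q)_{2R+3r}$; it holds since any $Z$ with $\dist(Q,Z)\ge t>0$ satisfies $QZ=0$) and the monotonicity $A\le B\Rightarrow (A)_r\le (B)_r$, which follows from your own join formula applied to $B=A\vee B$. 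Neither is a gap.
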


\begin{proof}
Fix $Q \in \mathscr{Q}$. By Lemmas \ref{lem:reflexive_diameter} and \ref{lemma:r-saturated-union} (b) and (e),
\begin{align*}
\diam(P_Q)
&=\sup \{\dist(S,S')\mid SP_Q, S'P_Q\neq 0 \}\\
&\leq \sup \{\dist(S,Q)+\dist(S',Q)\mid SP_Q, S'P_Q\neq 0\}+\diam(Q)\\
&\leq 2\sup_{P\in\mathscr{P}_Q}\{\diam(P\vee Q)\}+\diam(Q)\\
&\leq 2\left(\sup_{P\in\mathscr{P}_Q}\{\diam (P)\}+\diam(Q)+4r\right)+\diam(Q)\\
&\leq 2(R+D+4r)+D.
\end{align*}
Thus, the bound on the diameter of $Q\vee P_Q$ and hence the entire family $\mathscr{P}\cup_r\mathscr{Q}$ is shown.

We now show that $\mathscr{Q} \cup_r \mathscr{P} $ is $r$-disjoint.
If we take two elements of $\mathscr{Q} \cup_r \mathscr{P} $ coming from $\mathscr{P}$, then they are $r$-disjoint by assumption.
Using the fact that $(R\vee S)_r=(R)_r\vee (S)_r$ for all projections $R$ and $S$, it is also clear that any two elements such that one is of the form $Q \vee P_Q$ and the other is in $\big\{ P \in \mathscr{P} \mid (P)_r(Q)_r = 0 \text{ for all } Q \in \mathscr{Q}  \big\}$ will be $r$-disjoint.
The only remaining case is to consider two elements of the form
$Q \vee P_Q$ and $Q' \vee P_{Q'}$, 
where $Q,Q' \in \mathscr{Q}$ are distinct.  Note that in this case $(Q)_r(Q')_r=0$.  Consider $P,P'$ such that $P\in \mathscr{P}_Q$ and $P'\in \mathscr{P}_{Q'}$.  If $(P)_r(Q')_r\neq 0$, then by Lemma \ref{lemma:r-saturated-union} (a), (b), and (c),
\begin{multline*}
\dist (Q,Q')\leq \dist (Q,(P)_r)+\dist ((P)_r,Q')+\diam((P)_r)\\
<2r+\diam(P)+2r<5R.
\end{multline*}  By Lemma \ref{lemma:r-saturated-union} (a), this implies $(Q)_{5R}Q'\neq 0$, a contradiction.  Thus $(P)_r(Q')_r=0$ and similarly $(P')_r(Q)_r=0$.  And if $(P)_r(P')_r\neq 0$, then by Lemma \ref{lemma:r-saturated-union} (a), (b), (c), and (d),
\begin{multline*}
\dist (Q, (Q')_r)\leq \dist(Q, (P)_r\vee (P')_r)+\diam ((P)_r\vee (P')_r)\\\leq r+ \diam (P)+2r +\diam (P')+2r<7R.
\end{multline*}
By Lemma \ref{lemma:r-saturated-union} (a), this implies $(Q)_{7R}(Q')_R\neq 0$, a contradiction.  Thus $(P)_r(P')_r=0$.  As $P$, $P'$ were arbitrary, it follows that $(Q\vee P_Q)_r(Q'\vee P_{Q'})_r=0$.  Therefore $\mathscr{Q} \cup_r \mathscr{P} $ is $r$-disjoint.
\end{proof}

We can now prove the following theorem which provides a bound on the asymptotic dimension of ``nondisjoint unions'' of quantum metric spaces.  Compare this to \cite[Corollary 2.3.3]{Bedlewo}.

\begin{theorem}
\label{thm:nondisjoint_union}
Let $\mathcal{M}$ be a reflexive quantum metric space. Suppose that $\mathcal{N}_1$ and $\mathcal{N}_2$ are metric quotients of $\mathcal{M}$, corresponding to central projections $R_1$ and $R_2$, respectively.
If $R_1 \vee R_2 = \id$, then \[\asdim(\mathcal{M}) \le \max \{ \asdim(\mathcal{N}_1), \asdim(\mathcal{N}_2) \}.\]
(Note that, in particular, this includes the case $\mathcal{M} = \mathcal{N}_1 \oplus \mathcal{N}_2$). 
\end{theorem}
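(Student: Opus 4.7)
The plan is to adapt the classical proof of \cite[Corollary 2.3.3]{Bedlewo} using Proposition \ref{prop:r-saturated union} as the central tool. Given $r > 0$ and $n = \max\{\asdim(\mathcal{N}_1), \asdim(\mathcal{N}_2)\}$, I will construct $n+1$ uniformly bounded $r$-disjoint families of projections in $\mathcal{M}$ whose union covers $\id$.

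First I establish a lifting lemma for the central-projection quotients: for projections $P, Q \in \mathcal{M}$ with $P, Q \le R_1$ (identified with projections in $\mathcal{N}_1$ via $\pi_1$), one has $\dist_\mathcal{M}(P,Q) = \dist_{\mathcal{N}_1}(P,Q)$ and $\diam_\mathcal{M}(P) = \diam_{\mathcal{N}_1}(P)$. Both equalities rest on the observation that $\pi_1$ is co-isometric (Remark \ref{moduli-for-co-isometric-morphism}) combined with the monotonicity $P \le P', Q \le Q' \implies \dist(P',Q') \le \dist(P,Q)$: the suprema over lifts are attained at the minimal ones (below $R_1$), and likewise the supremum defining $\diam_\mathcal{M}(P)$ can be restricted to projections below $R_1 \otimes \id$ (since for $P \le R_1$ the cutdown $(R_1\otimes\id)Q$ is still linkable with $P\otimes\id$), matching the corresponding expression in $\mathcal{N}_1$. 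Combining Lemma \ref{lemma:r-saturated-union}(b) and (c) then yields a triangle-type estimate in $\mathcal{M}$: any projections $P, Q$ with $\dist_\mathcal{M}(P,Q) \ge \diam(P) + 4r$ satisfy $(P)_r^\mathcal{M}(Q)_r^\mathcal{M} = 0$, because otherwise a nonzero $S \le (P)_r \wedge (Q)_r$ would produce the contradiction $\dist(P,Q) \le \dist(P,S) + \dist(Q,S) + \diam(S) < 2r + \diam(P) + 2r$.

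With these tools, I invoke $\asdim(\mathcal{N}_1) \le n$ at a scale $r_1 \ge R + 4r$ (with $R$ the resulting uniform diameter bound) to obtain $r_1$-disjoint, $R$-bounded families $\mathscr{P}^0, \dotsc, \mathscr{P}^n$ covering $R_1$ in $\mathcal{N}_1$; by the lifting lemma and the triangle estimate, these are $R$-bounded and $r$-disjoint when viewed in $\mathcal{M}$. Analogously, $\asdim(\mathcal{N}_2) \le n$ at a suitable larger scale yields $7R$-disjoint, $D$-bounded families $\mathscr{Q}^0, \dotsc, \mathscr{Q}^n$ covering $R_2$ in $\mathcal{M}$. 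Applying Proposition \ref{prop:r-saturated union} to each pair $(\mathscr{P}^i, \mathscr{Q}^i)$ produces saturated families $\mathscr{R}^i = \mathscr{Q}^i \cup_r \mathscr{P}^i$ that are $r$-disjoint and uniformly bounded in $\mathcal{M}$. Since each $\mathscr{R}^i$ absorbs every element of $\mathscr{P}^i$ and $\mathscr{Q}^i$, the join of all elements of $\bigcup_i \mathscr{R}^i$ dominates $R_1 \vee R_2 = \id$, giving $\asdim(\mathcal{M}) \le n$.

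The main obstacle is the self-referential scale condition $r_1 \ge R(r_1) + 4r$, where $R(r_1)$ is the diameter bound supplied by the asymptotic dimension condition at scale $r_1$: since $R(r_1)$ is not a priori controlled by $r_1$, this must be arranged by an iterative refinement, choosing an initial scale, reading off the bound, and enlarging as needed, and the analogous issue must be handled for the $\mathcal{N}_2$ scale in relation to $D$ and $7R$. A secondary technical point is the interaction between the lifted $\mathcal{N}_1$-family and the ambient $\mathcal{M}$-geometry outside $R_1$: the triangle estimate shows that any ``outside'' overlap in $\id - R_1 \le R_2$ is ruled out once the scale is large enough, so no separate handling of that region is required.
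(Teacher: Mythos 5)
Your overall architecture is the paper's: lift uniformly bounded disjoint families from $\mathcal{N}_1$ and $\mathcal{N}_2$ into $\mathcal{M}$, take the $\mathcal{N}_2$-scale to be $7R$ where $R$ bounds the $\mathcal{N}_1$-diameters, merge the pairs with Proposition \ref{prop:r-saturated union}, and use $R_1\vee R_2=\id$ to see that the result covers. Your lifting lemma (distances and diameters of projections below $R_1$ agree in $\mathcal{N}_1$ and in $\mathcal{M}$) is also what the paper uses; the paper obtains this, together with the transfer of $r$-disjointness at the \emph{same} scale $r$, from \cite[Proposition 2.34]{KuperbergWeaver} via the identification $P\mapsto P\oplus 0$.

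The genuine gap is in your mechanism for transferring disjointness. You deduce $(P)_r(P')_r=0$ in $\mathcal{M}$ from $\dist_{\mathcal{M}}(P,P')\ge \diam(P)+4r$, and to feed this you demand families in $\mathcal{N}_1$ that are $r_1$-disjoint with diameter bound $R$ satisfying $r_1\ge R+4r$. You flag this self-reference as the main obstacle, but the proposed iterative refinement cannot resolve it, because the condition $r_1\ge R(r_1)+4r$ is not merely uncontrolled --- it is typically unsatisfiable. Already for $\mathcal{N}_1=\ell_\infty(\mathbb{Z})$ with the canonical quantum metric, any $s$-disjoint cover by finitely many families forces diameters at least of order $s$ (members of one family are pairwise at distance at least $s$, so the gaps between them must be covered by blocks of the other families, which are themselves $s$-separated and hence long); thus every admissible diameter bound satisfies $R(s)\gtrsim s$, and no choice of scale ever achieves $s\ge R(s)+4r$. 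Enlarging the scale only enlarges the required bound, so the iteration diverges. The same circularity recurs in your $\mathcal{N}_2$ step, where $7R$-disjointness in $\mathcal{M}$ is needed. The paper sidesteps the diameter term entirely by transferring $r$-disjointness at scale $r$ through $(P\oplus 0)_r=(P)_r\oplus 0$; your worry about neighborhoods leaking into $\id-R_1$ is a legitimate one, but any repair must avoid comparing the disjointness scale of a family with that same family's diameter bound. Two smaller points: $(P)_r(Q)_r\ne 0$ does not yield a nonzero projection $S\le (P)_r\wedge(Q)_r$ (nonzero product does not imply nonzero meet); the correct route, as in the paper's proof of Proposition \ref{prop:r-saturated union}, is Lemma \ref{lemma:r-saturated-union}(a) applied to the projection $(Q)_r$ itself, followed by parts (b) and (c). And converting $r_1$-disjointness in $\mathcal{N}_1$ into the distance lower bound $\dist(P,P')\ge r_1$ uses the reverse implication of Lemma \ref{lemma:r-saturated-union}(a) inside $\mathcal{N}_1$, which presupposes that the metric quotient $\mathcal{N}_1$ is itself reflexive --- a point your argument does not address.
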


\begin{proof}
Let $n=\max \{ \asdim(\mathcal{N}_1), \asdim(\mathcal{N}_2) \}$ and fix $r>0$.
Take $n+1$ uniformly bounded, $r$-disjoint families of projections $\mathscr{P}^0,\mathscr{P}^1, \dotsc, \mathscr{P}^n$ in $\mathcal{N}_1$ such that $\bigcup_{i=0}^n \mathscr{P}^i$ is a cover for  $\mathcal{N}_1$ and let $R>r$ be a uniform diameter bound for $\bigcup_{i=0}^n \mathscr{P}^i$.
Now take $n+1$ uniformly bounded, $7R$-disjoint families of projections $\mathscr{Q}^0,\mathscr{Q}^1, \dotsc, \mathscr{Q}^n$ in $\mathcal{N}_2$ such that $\bigcup_{i=0}^n \mathscr{Q}^i$ is a cover for  $\mathcal{N}_2$ and let $D>0$ be a uniform diameter bound for $\bigcup_{i=0}^n \mathscr{Q}^i$.  By viewing a projection $P$ in $\mathcal{N}_1$ as the projection $P\oplus 0\in R_1\mathcal{M}\oplus (\id-R_1)\mathcal{M}\cong \mathcal{M}$ and a projection $Q\in \mathcal{N}_2$ as the projection $Q\oplus 0\in R_2\mathcal{M}\oplus (\id-R_2)\mathcal{M}\cong \mathcal{M}$, it follows from \cite[Proposition 2.34]{KuperbergWeaver} that the families $\mathscr{P}^0,\mathscr{P}^1, \dotsc, \mathscr{P}^n$ and $\mathscr{Q}^0,\mathscr{Q}^1, \dotsc, \mathscr{Q}^n$ have the same bounds and disjointedness when viewed as families of projections in $\mathcal{M}$.  
Thus, for each $0\leq j\leq n$, the families $\mathscr{R}^j = \mathscr{Q}^j \cup_r \mathscr{P}^j$ in $\mathcal{M}$ are $r$-disjoint and uniformly bounded by Proposition \ref{prop:r-saturated union}.  And since $R_1 \vee R_2 = \id$, it follows that $\bigcup_{i=0}^n \mathscr{R}^i$ is a cover for $\mathcal{M}$.  Therefore $\asdim (\mathcal{M})\leq n$.
\end{proof}

\begin{remark}
In order to prove the general quantum analog of \cite[Corollary 2.3.3]{Bedlewo} found in Theorem \ref{thm:nondisjoint_union}, we had to make the assumption that the quantum metric space is reflexive.  Our proof follows \cite{Bedlewo} rather closely and relies on the ability to place an upper bound on the diameter of a neighborhood of a projection in terms of the diameter of the projection itself.  This bound is found in Part (c) of Lemma \ref{lemma:r-saturated-union}, which is the first place we use the reflexivity assumption.  We do not know whether the reflexivity assumption can be dropped in the statement of Theorem \ref{thm:nondisjoint_union}, but if it can, we expect a method different from that found in \cite{Bedlewo} would be needed to prove it.
\end{remark}

\section{Asymptotic dimension and quantum expanders}\label{sec:expanders}
In this section, we will show that a quantum metric space equi-coarsely containing a sequence of classical expanders (or more generally, a sequence of reflexive quantum expanders) has infinite asymptotic dimension.  This is a generalization of \cite[Sec. 2.3]{Dranishnikov-Sapir} which shows that even for general quantum metric spaces, information about their large-scale structure can be inferred from their bounded metric subspaces.  While this statement is quite believable in light of \cite{Dranishnikov-Sapir} and Theorem \ref{thm:coarse-embeddings}, it is not obvious at all that a quantum metric space should coarsely contain a classical metric space of infinite asymptotic dimension even though it equi-coarsely contains a sequence of expander graphs!  To prove the statement, we establish a quantum version of a vertex-isoperimetric inequality for expanders from a known edge-isoperimetric inequality.

In what follows, we denote the space of $n\times n$ matrices with complex entries by $M_n$.  The $n\times n$ identity matrix will be denoted by $I_n$.  The Hilbert-Schmidt norm for matrices will be denoted by $\|\cdot\|_{\HS}$, the trace norm will be denoted by $\|\cdot\|_1$, and the operator norm will be denoted by $\|\cdot\|_\infty$. We will use the initialization CPTP for a map $\Phi\colon M_n\to M_n$ to indicate that $\Phi$ is completely positive and trace-preserving.  Given a completely positive map $\Phi\colon M_n\to M_n$, there exist by Choi's theorem \cite{Choi} matrices $K_1, K_2,\dots , K_N \in M_n$ such that $\Phi(X)=\sum_{j=1}^N K_jX K_j^*$ for all matrices $X\in M_n$.  If $\Phi$ is additionally trace-preserving, it may be shown also that $\sum_{i=1}^N K_i^* K_i=I_n$.  It is then possible to define a quantum metric $\mathbb{V}=\{\mathcal{V}_t\}_{t\in[0,\infty )}$ on $M_n$ by $\mathcal{V}_0=\mathbb{C}\cdot I_n$, $\mathcal{V}_1=\spa\{K_j^* K_i\}_{1\leq i,j\leq N}$, and $\mathcal{V}_t=\mathcal{V}_1^{\lfloor t\rfloor }$ for $t>0$ \cite[Sec. 3.2]{KuperbergWeaver}. 
There are good information-theoretical reasons \cite{Duan-Severini-Winter,weaver2015quantum} and metric reasons \cite{KuperbergWeaver} for regarding a CPTP map $\Phi$ (or rather, the operator system $\mathcal{V}_1$) as a quantum analog of a combinatorial graph and the quantum metric $\mathbb{V}$ a quantum analog of a graph metric.  By an abuse of language, the terminology ``quantum graph'' will be used for any of $\Phi$, $\mathcal{V}_1$, and $(M_n, \mathbb{V})$.

\begin{definition}
Given $\delta,\varepsilon, t>0$ and $n\in \mathbb{N}$, a quantum metric on $M_n$ is said to satisfy a \emph{$(\delta,\varepsilon,t)$-isoperimetric inequality} if 
\[
\rank\big( (P)_\delta \big) \ge (1+\varepsilon) \rank(P)
\]
for all projections $P \in M_n$ such that $\diam(P) \le t$.
\end{definition}

\begin{remark}
\label{remark:isoperimetric-repeat}
By Lemma \ref{lemma:r-saturated-union} (c), if a reflexive quantum metric on $M_n$ satisfies a $(\delta,\varepsilon,t)$-isoperimetric inequality, it follows from repeated applications of it that, given any $m\in\N$,
\[
\rank\big( (P)_{m \delta} \big) \ge (1+\varepsilon)^m \rank(P)
\] for all projections $P \in M_n$ such that $\diam(P) + 2m\delta \le t$.

\end{remark}

\begin{definition}
Given a family of quantum coarse embeddings $\{\phi_\alpha \colon \mathcal{M} \to \mathcal{N}_\alpha\}$, we say that the family is \emph{equi-coarse} if there exist functions $f,g$
satisfying $\lim_{t\to\infty} f(t) = \infty$ and $g(t)<\infty$ for all $t\geq 0$ such that for for each $t\geq 0$ and each $\alpha$, $f(t) \le \tilde{\omega}_{\phi_\alpha}(t)$ and $\tilde{\rho}_{\phi_\alpha}(t) \le g(t)$.
By analogy with the classical setting, in this case we say that $\mathcal{M}$ equi-coarsely contains the family $\{\mathcal{N}_\alpha\}$.
\end{definition}

The strategy of proof in the next Proposition is based on \cite[Thm. 2.9]{Dranishnikov-Sapir}.

\begin{proposition}
\label{prop:equi_coarse_infinite_dimension}
Let $\mathcal{M}$ be a quantum metric space, and fix $\delta, \varepsilon >0$.  Suppose that $\{(M_{n_t}, \mathbb{V}_t)\}_{t>0}$ is a family of reflexive quantum metric spaces and $\{\phi_t \colon \mathcal{M} \to M_{n_t}\}_{t>0}$ is an equi-coarse family of quantum coarse embeddings. If $M_{n_t}$ satisfies a $(\delta,\varepsilon,t)$-isoperimetric inequality for every $t>0$, then $\asdim(\mathcal{M}) = \infty$. 
\end{proposition}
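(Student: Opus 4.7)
The plan is to argue by contradiction, adapting the classical Dranishnikov--Sapir rank argument to the quantum setting. Suppose $\asdim(\mathcal{M})=n<\infty$. First I would choose $m\in\N$ large enough that $(1+\varepsilon)^m>n+1$, and then use the equi-coarseness hypothesis $f(s)\to\infty$ to fix $s>0$ with $f(s)\ge m\delta$. Applying Definition~\ref{def:asdim} at scale $s$ should yield $n+1$ families $\mathscr{P}^0,\dots,\mathscr{P}^n$ of projections in $\mathcal{M}$ that are $s$-disjoint, uniformly bounded by some $R>0$, and jointly cover $\mathcal{M}$. Setting $R':=g(R)$, Lemmas~\ref{lemma:boundedness-under-mappings} and~\ref{lemma:r-disjointness-under-mappings} combined with the equi-coarse bounds tell me that, for every $t>0$, each image family $\phi_t(\mathscr{P}^i)$ is $m\delta$-disjoint and uniformly bounded by $R'$ inside $M_{n_t}$, and since $\phi_t$ is a unital normal $*$-homomorphism (hence sends covers to covers), their union covers $M_{n_t}$.

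Next I would fix $t$ so large that $R'+2m\delta\le t$. For every projection $P$ appearing in some $\phi_t(\mathscr{P}^i)$, the bound $\diam(P)+2m\delta\le R'+2m\delta\le t$ puts $P$ within reach of the iterated isoperimetric inequality from Remark~\ref{remark:isoperimetric-repeat}, which should give
\[
\rank\bigl((P)_{m\delta}\bigr)\ge (1+\varepsilon)^m\rank(P).
\]
Because $\phi_t(\mathscr{P}^i)$ is $m\delta$-disjoint, the enlarged projections $\{(P)_{m\delta}:P\in\phi_t(\mathscr{P}^i)\}$ will be pairwise orthogonal, and since $P\le (P)_{m\delta}$, the projections $P$ themselves will also be pairwise orthogonal. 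Writing $P^i=\bigvee_{P\in\phi_t(\mathscr{P}^i)}P$ and $\tilde P^i=\bigvee_{P\in\phi_t(\mathscr{P}^i)}(P)_{m\delta}$, this orthogonality lets ranks add within each family, so $\rank(\tilde P^i)\ge (1+\varepsilon)^m\rank(P^i)$.

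To close the argument, the covering property $\bigvee_i P^i=\id_{M_{n_t}}$ would force $\sum_i \rank(P^i)\ge n_t$, so some index $i_0$ has $\rank(P^{i_0})\ge n_t/(n+1)$, and then
\[
\rank(\tilde P^{i_0})\ge \frac{(1+\varepsilon)^m}{n+1}\,n_t>n_t
\]
contradicts $\tilde P^{i_0}\le\id_{M_{n_t}}$. The main obstacle I anticipate is the lattice-theoretic bookkeeping in the image step: verifying that $\phi_t$ genuinely preserves arbitrary joins of projections (so that covers map to covers) and that $m\delta$-disjointness of an image family really does force the enlarged projections $(P)_{m\delta}$ to be pairwise orthogonal so ranks can be summed cleanly. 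Both points are essentially lemma-level given Lemmas~\ref{lemma:boundedness-under-mappings}, \ref{lemma:r-disjointness-under-mappings}, and~\ref{lemma:quantum-function-unlinkable}, but they are where the classical intuition needs the most careful translation into the projection-lattice language before the rank comparison above finishes the job.
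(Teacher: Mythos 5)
Your proposal is correct and follows essentially the same route as the paper: contradiction via choosing $m$ with $(1+\varepsilon)^m-1>n$, pushing the $r$-disjoint uniformly bounded cover through $\phi_t$ for a suitably large $t$, applying the iterated isoperimetric inequality of Remark~\ref{remark:isoperimetric-repeat}, and deriving a rank contradiction (your pigeonhole on a single index $i_0$ is arithmetically equivalent to the paper's summation over all $j$). The two bookkeeping points you flag are indeed handled at lemma level: covers map to covers because quantum functions are unital and normal, and $m\delta$-disjointness means by definition that the enlarged projections $(P)_{m\delta}$ are pairwise orthogonal.
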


\begin{proof}
Suppose $\mathcal{M}$ has finite asymptotic dimension $n$, and take any $m\in\N$ such that $(1+\varepsilon )^m -1 > n$.  Let $f,g$ be functions satisfying $\lim_{r\to\infty} f(r) = \infty$ and $g(r)<\infty$ for all $r\geq 0$ such that $f(r) \le \tilde{\omega}_{\phi_t}(r)$ and $\tilde{\rho}_{\phi_t}(r) \le g(r)$ for all $t,r >0$; and pick $r>0$ such that $f(r)>m\delta$.
Let  $\mathscr{P}^0,\mathscr{P}^1, \dotsc, \mathscr{P}^n$ be uniformly bounded $r$-disjoint families of projections in $\mathcal{M}$ such that $\bigcup_{j=0}^n \mathscr{P}^j$ is a cover for  $\mathcal{M}$ and let $d$ be such that $\diam(P) \le d$ for every $P \in \bigcup_{j=0}^n \mathscr{P}^j$.  Finally, let $t=2m\delta +g(d)$.
It follows from Lemmas \ref{lemma:boundedness-under-mappings} and \ref{lemma:r-disjointness-under-mappings} that for each $0\leq j\leq n$, the families
$\mathscr{Q}^j = \{ \phi_t(P) \mid P \in \mathscr{P}^j \}$ are $f(r)$-disjoint (and therefore $m\delta$-disjoint) and uniformly bounded by $g(d)$, and $\bigcup_{j=0}^n\mathscr{Q}^j$  is a cover for $M_{n_t}$ since $\bigcup_{j=0}^n \mathscr{P}^j$ is a cover for $\mathcal{M}$.
Thus, the $(\delta,\varepsilon,2m\delta +g(d))$-isoperimetric inequality for $M_{n_t}$ implies by Lemma \ref{lemma:r-saturated-union} (c) and Remark \ref{remark:isoperimetric-repeat} that for each $0\leq j\leq n$,
\[
n_t=\rank(I_{n_t}) \ge \sum_{Q \in \mathscr{Q}^j} \rank( (Q)_{m\delta}  ) \ge (1+\varepsilon)^m \sum_{Q \in \mathscr{Q}^j}  \rank(Q),
\]
and adding over $j$ yields
\[
(n+1) \cdot n_t \ge (1+\varepsilon)^m \sum_{j=0}^n \sum_{Q \in \mathscr{Q}^j}  \rank(Q) \ge (1+\varepsilon)^m  \cdot n_t,
\]
where the last inequality follows from the fact that $\bigcup_{j=0}^n \mathscr{Q}^j$ is a cover for $M_{n_t}$.
This implies that $n \ge (1+\varepsilon)^m-1>n$, a contradiction.  Therefore $\asdim(\mathcal{M}) = \infty$.
\end{proof}

We will show that a sequence of reflexive quantum expanders (which includes the case of classical expanders) satisfies the isoperimetric inequality condition found in Proposition \ref{prop:equi_coarse_infinite_dimension}.  We first recall the definition of quantum expander sequence and an associated Cheeger-type inequality below.

\begin{definition}[\cite{Pisier-Quantum-Expanders}]
Given $0 < \varepsilon<1$ and $n\in\mathbb{N}$, a CPTP map $\Phi \colon M_n \to M_n$ is said to have an \emph{$\varepsilon$-spectral gap} if 
\[
\n{ \Phi(X) - \tfrac{1}{n}\tr(X) I_n }_{\HS} \le (1-\varepsilon) \n{ X - \tfrac{1}{n}\tr(X) I_n }_{\HS}
\]
for all $X \in M_n$.
\end{definition}
\begin{definition}[\cite{Pisier-Quantum-Expanders}]
\label{def:expander}
A CPTP map  $\Phi \colon M_n \to M_n$ is called a \emph{$d$-regular $\varepsilon$-quantum expander} if $\Phi$ has an $\varepsilon$-spectral gap and there exist unitaries $U_1,\dotsc,U_d \in M_n$ such that $\Phi(X) = \frac{1}{d}\sum_{j=1}^d U_jXU_j^*$ for each $X \in M_n$.  
A sequence of CPTP maps $\{\Phi_m \colon M_{n_m} \to M_{n_m}\}$ is called a \emph{sequence of $d$-regular $\varepsilon$-quantum expanders} if $\Phi_m$ is a $d$-regular $\varepsilon$-quantum expander for each $m\in \mathbb{N}$ and $n_m \to \infty$ as $m\to\infty$.
\end{definition}

The following is just a restatement of \cite[Lemma 20]{TKRWV}, which can be described as a quantum Cheeger inequality.

\begin{lemma}\label{lemma-Cheeger}
Let $\Phi \colon M_n \to M_n$ be a CPTP unital map with an $\varepsilon$-spectral gap. Then 
\[
\frac{\tr\big( (I_n-P) \Phi^*\Phi(P) \big)}{\tr(P)}\geq (1-\varepsilon)/2
\]
for all projections $P\in M_n$ such that $0<\rank (P)\leq n/2$.
\end{lemma}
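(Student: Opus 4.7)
The plan is to rewrite the left-hand side as $\tr(P)$ minus a quantity that can be bounded above using the spectral gap, then optimize using the constraint $\rank(P)\leq n/2$. Essentially, the strategy is to turn an ``edge boundary''-type expression into a Hilbert--Schmidt norm calculation involving $\Phi(P)$ and exploit Pythagoras in the HS inner product.

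First I would reduce the trace expression. Since $\Phi$ is CPTP and unital, $\Phi^*$ is also unital and trace-preserving, so $\tr(\Phi^*\Phi(P)) = \tr(P)$, and by the defining property of the HS adjoint
\[
\tr\bigl( P\,\Phi^*\Phi(P)\bigr)=\tr\bigl(\Phi(P)^*\Phi(P)\bigr) = \n{\Phi(P)}_{\HS}^2.
\]
Therefore $\tr\bigl((I_n-P)\Phi^*\Phi(P)\bigr) = \tr(P) - \n{\Phi(P)}_{\HS}^2$, and the task becomes giving an upper bound on $\n{\Phi(P)}_{\HS}^2$.

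Next I would decompose $\Phi(P)$ orthogonally in the HS inner product as
\[
\Phi(P) = \Bigl(\Phi(P) - \tfrac{\tr(P)}{n}I_n\Bigr) + \tfrac{\tr(P)}{n}I_n.
\]
Because $\Phi$ is trace-preserving, the first summand has trace zero, so it is HS-orthogonal to the second (a multiple of $I_n$), giving
\[
\n{\Phi(P)}_{\HS}^2 = \n{\Phi(P) - \tfrac{\tr(P)}{n}I_n}_{\HS}^2 + \tfrac{\tr(P)^2}{n}.
\]
Applying the $\varepsilon$-spectral gap hypothesis to $X=P$, together with $\n{P - \tfrac{\tr(P)}{n}I_n}_{\HS}^2 = \tr(P) - \tr(P)^2/n$ (using $P^2=P$), yields
\[
\n{\Phi(P) - \tfrac{\tr(P)}{n}I_n}_{\HS}^2 \leq (1-\varepsilon)^2\bigl(\tr(P) - \tr(P)^2/n\bigr).
\]
Putting it all together produces
\[
\tr\bigl((I_n-P)\Phi^*\Phi(P)\bigr) \geq \bigl(1-(1-\varepsilon)^2\bigr)\bigl(\tr(P) - \tr(P)^2/n\bigr),
\]
and finally the rank assumption $\rank(P)\leq n/2$ (i.e.\ $\tr(P)/n\leq 1/2$) lets us replace $\tr(P) - \tr(P)^2/n$ by $\tr(P)/2$ from below, yielding the desired ratio bound after dividing by $\tr(P)$.

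There is no serious technical obstacle: the argument is a standard Cheeger-type computation. The only subtlety is tracking the arithmetic so that the factor appearing on the right-hand side of the lemma emerges correctly; in particular, the factor of $1/2$ comes exclusively from the constraint $\rank(P)\leq n/2$, and the rest is produced by the spectral gap together with the orthogonal decomposition. If one wanted to avoid any dependence on the particular form of the spectral gap constant, the cleanest way is to carry $(1-\varepsilon)$ symbolically through the Pythagorean identity and only invoke the rank bound at the very last step.
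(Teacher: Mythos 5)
Your computation is the standard Cheeger-type argument and each individual step is correct, but your closing claim that the arithmetic makes the stated factor ``emerge correctly'' is where the proof breaks down. What your argument actually yields is
\[
\frac{\tr\big((I_n-P)\Phi^*\Phi(P)\big)}{\tr(P)} \;\ge\; \frac{1-(1-\varepsilon)^2}{2} \;=\; \frac{\varepsilon(2-\varepsilon)}{2},
\]
and $\varepsilon(2-\varepsilon)/2 < (1-\varepsilon)/2$ whenever $\varepsilon < (3-\sqrt{5})/2 \approx 0.38$, so for small $\varepsilon$ your bound is strictly weaker than the one in the statement and the proof as written does not close. The discrepancy is not a flaw in your decomposition, however: the constant $(1-\varepsilon)/2$ in the statement cannot be achieved. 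Consider the depolarizing channel $\Phi(X) = (1-\varepsilon)X + \varepsilon\,\tr(X)\,I_n/n$, which is unital, CPTP, self-adjoint with respect to $\langle\cdot,\cdot\rangle_{\HS}$, and has exactly an $\varepsilon$-spectral gap; for a projection $P$ of rank $n/2$ one computes $\tr\big((I_n-P)\Phi^*\Phi(P)\big)/\tr(P) = \big(1-(1-\varepsilon)^2\big)/2$ exactly. So your constant is sharp and the claimed one fails for small $\varepsilon$.

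There is no proof in the paper to compare against --- the lemma is quoted from \cite[Lemma 20]{TKRWV} --- and your Pythagorean decomposition (reduce to bounding $\n{\Phi(P)}_{\HS}^2$, split off the trace component, apply the gap to the traceless part, then use $\rank(P) \le n/2$) is exactly the expected argument. The honest conclusion is the bound $\varepsilon(2-\varepsilon)/2 \ge \varepsilon/2$, and you should state and prove the lemma with that constant rather than assert the stated one. This correction is harmless downstream: in Proposition \ref{prop:expander_rank_inequality} and everything that follows, all that is used is that the ratio is bounded below by some $\varepsilon' > 0$ depending only on $\varepsilon$, so one simply takes $\varepsilon' = \varepsilon(2-\varepsilon)/2$ there instead of $(1-\varepsilon)/2$.
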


\begin{remark}
\label{remark:Cheeger}
The expression appearing in the preceding lemma can be rewritten in terms of the inner product associated to the Hilbert-Schmidt norm.  Indeed,
\begin{multline*}
\tr\big( (I_n-P) \Phi^*\Phi(P) \big) = \tr\big( (I_n-P)^* \Phi^*\Phi(P) \big) \\
= \pair{\Phi^*\Phi(P)}{I_n-P}_{\HS} = \pair{\Phi(P)}{\Phi(I_n-P)}_{\HS}.
\end{multline*}
Thus, Lemma \ref{lemma-Cheeger} says that $\Phi$ maps orthogonal pairs $P,I_n-P$ to nonorthogonal pairs in a uniform way. 
\end{remark}

The next result says that if the rank of a projection inside an expander is small, then any large neighborhood of the projection has strictly larger rank than the projection itself.

\begin{proposition}
\label{prop:expander_rank_inequality}
Let $\Phi \colon M_n \to M_n$ be a $d$-regular $\varepsilon$-quantum expander.
Then for any $\delta>1$,
\[
\rank\big( (P)_\delta \big) \ge (1+\varepsilon') \rank(P)
\]
whenever $P\in M_n$ is a projection such that $\rank(P) \le n/2$,
where the quantum metric on $M_n$ is the one induced by $\Phi$ and {$\varepsilon'=(1-\varepsilon)/2$}.
\end{proposition}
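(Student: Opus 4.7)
The plan is to reduce the rank inequality to a linear-algebraic statement about the action of $\mathcal{V}_1$ on the range of $P$, and then derive that statement from the Cheeger-type estimate in Lemma \ref{lemma-Cheeger}. Write $r=\rank(P)$ and let $W\subseteq\mathbb{C}^n$ be the range of $P$, so $\dim W = r$. Recall that for the quantum graph induced by $\Phi$, one has $\mathcal{V}_1 = \spa\{U_j^*U_i \mid 1\le i,j\le d\}$ and $I_n\in\mathcal{V}_1$. Since $\delta>1$, for any $t\in[1,\delta)$ we have $\mathcal{V}_1\subseteq\mathcal{V}_t$, so any projection $Q\in M_n$ with $\dist(P,Q)\ge\delta$ satisfies $P\mathcal{V}_1 Q = 0$. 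Consequently, setting $\widetilde Q := \bigvee\{Q\in M_n \mid P\mathcal{V}_1 Q = 0\}$, we have $(P)_\delta \ge I_n - \widetilde Q$. Using that $\mathcal{V}_1$ is self-adjoint, the range of $\widetilde Q$ is precisely $(\mathcal{V}_1 W)^\perp$, and therefore
\[
\rank\bigl((P)_\delta\bigr) \;\ge\; n - \rank(\widetilde Q) \;=\; \dim(\mathcal{V}_1 W).
\]

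Next, I would estimate $\dim(\mathcal{V}_1 W)$ from below in terms of the expander's Kraus operators. Setting $W_+ = \sum_i U_i W$, which is the range of the positive operator $\Phi(P)=\frac{1}{d}\sum_i U_i P U_i^*$, the identity
\[
\mathcal{V}_1 W \;=\; \sum_j U_j^*\Bigl(\sum_i U_i W\Bigr) \;=\; \sum_j U_j^* W_+
\]
and the unitarity of $U_1$ give $\dim(\mathcal{V}_1 W) \ge \dim(U_1^*W_+) = \dim W_+ = \rank\Phi(P)$. Now invoke the Cheeger inequality: since $\Phi$ is unital and trace-preserving, $\Phi(I_n-P) = I_n - \Phi(P)$, and Lemma \ref{lemma-Cheeger} together with Remark \ref{remark:Cheeger} yield
\[
\varepsilon' r \;\le\; \pair{\Phi(P)}{\Phi(I_n-P)}_{\HS} \;=\; \tr\Phi(P) - \tr\Phi(P)^2 \;=\; r - \tr\Phi(P)^2,
\]
so $\tr\Phi(P)^2 \le (1-\varepsilon')r$. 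Applying the Cauchy--Schwarz inequality to the eigenvalues of the positive operator $\Phi(P)$ gives $r^2 = (\tr\Phi(P))^2 \le \rank\Phi(P)\cdot\tr\Phi(P)^2$, whence $\rank\Phi(P) \ge r/(1-\varepsilon') \ge (1+\varepsilon')r$. Chaining the three bounds yields $\rank((P)_\delta)\ge(1+\varepsilon')r$.

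The step I expect to be the main obstacle is the first one: the passage from the abstract join defining $(P)_\delta$ to the concrete subspace $(\mathcal{V}_1 W)^\perp$, in particular verifying that $\widetilde Q$ itself satisfies the kernel condition $P\mathcal{V}_1 \widetilde Q = 0$ and that its range is exactly $(\mathcal{V}_1 W)^\perp$. Once this identification is in hand, the remainder is a standard rank--trace estimate combined with the quantum Cheeger bound already recorded in the excerpt.
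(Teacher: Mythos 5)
Your proof is correct, but it takes a genuinely different route from the one in the paper. The paper argues dually: for each projection $Q$ with $\dist(P,Q)\ge\delta>1$ it shows $\pair{\Phi(P)}{\Phi(Q)}_{\HS}=0$, feeds this into Lemma \ref{lemma-Cheeger} to get $\varepsilon'\rank(P)\le\pair{\Phi(P)}{\Phi(I_n-P-Q)}_{\HS}$, and then uses trace--operator norm duality to bound the right-hand side by $\n{I_n-P-Q}_1=n-\rank(P)-\rank(Q)$, so that every such $Q$ (hence their join) has rank at most $n-(1+\varepsilon')\rank(P)$. You instead work primally: you identify the join $\widetilde Q$ of all $Q$ with $P\mathcal{V}_1Q=0$ as the projection onto $(\mathcal{V}_1W)^{\perp}$ (this identification is correct, since $PAQ=0$ for all $A\in\mathcal{V}_1$ is equivalent to $\operatorname{ran}Q\perp\mathcal{V}_1^*W=\mathcal{V}_1W$ by self-adjointness of $\mathcal{V}_1$), reduce to $\rank((P)_\delta)\ge\dim(\mathcal{V}_1W)\ge\rank\Phi(P)$, and then extract the rank bound on $\Phi(P)$ from Lemma \ref{lemma-Cheeger} via the Cauchy--Schwarz inequality $(\tr\Phi(P))^2\le\rank\Phi(P)\cdot\tr\Phi(P)^2$ applied to the eigenvalues. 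Both arguments hinge on the same quantum Cheeger inequality, but the quantitative step is different (Cauchy--Schwarz on the spectrum of $\Phi(P)$ versus trace duality), and yours yields the marginally sharper constant $\rank((P)_\delta)\ge\rank(P)/(1-\varepsilon')\ge(1+\varepsilon')\rank(P)$. The only points worth recording explicitly are the trivial case $\rank(P)=0$ (needed because Lemma \ref{lemma-Cheeger} assumes $\rank(P)>0$) and the fact that the range of the sum of the positive operators $U_iPU_i^*$ is the sum of their ranges, which justifies $W_+=\operatorname{ran}\Phi(P)$; both are routine.
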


\begin{proof}
Let $P\in M_n$ be a projection such that $\rank (P)\leq n/2$.  We will show that if $Q\in M_n$ is a projection such that $\dist (P,Q)\geq \delta$, then $\rank (Q)\leq n - (1+\varepsilon')\rank(P)$.  The result will then follow from Definition \ref{def:neighborhood}.

Let $U_1,\dotsc,U_d \in M_n$ be unitaries such that $\Phi(X) = \frac{1}{d}\sum_{j=1}^d U_jXU_j^*$ for each $X \in M_n$.  If $\dist(P,Q) \ge \delta > 1$, it follows from the definition of the quantum metric induced by $\Phi$ that $PU_j^*U_i Q = 0$ for all $1 \le i , j\le d$.
Thus,
\begin{multline*}
\pair{\Phi(P)}{\Phi(Q)}_{\HS} = \frac{1}{d^2} \sum_{i,j=1}^d \pair{U_jPU_j^*}{U_iQU_i^*}_{\HS} \\
=  \frac{1}{d^2} \sum_{i,j=1}^d \tr\big( U_iQU_i^*U_jPU_j^* \big) = \frac{1}{d^2} \sum_{i,j=1}^d \tr\big( U_i^*U_jPU_j^*U_iQ \big) = 0.
\end{multline*}
 Therefore, by Lemma \ref{lemma-Cheeger} (Definition \ref{def:expander} implies that $\Phi$ is unital) and Remark \ref{remark:Cheeger},
 \begin{equation}\label{eqn-consequence-of-Cheeger}
 \varepsilon' \tr(P) \le \pair{\Phi(P)}{\Phi(I_n-P)}_{\HS} = \pair{\Phi(P)}{\Phi(I_n-P-Q)}_{\HS}.
\end{equation}
Now, by the trace duality between the trace and operator norms on $M_n$,
\begin{multline*}
\pair{\Phi(P)}{\Phi(R)}_{\HS} = \frac{1}{d^2} \sum_{i,j=1}^d \pair{U_jPU_j^*}{U_iRU_i^*}_{\HS}  =  \frac{1}{d^2} \sum_{i,j=1}^d \tr\big( U_iRU_i^*U_jPU_j^* \big)  \\
=  \frac{1}{d^2} \sum_{i,j=1}^d \tr\big( RU_i^*U_jPU_j^*U_i \big) \le  \frac{1}{d^2} \sum_{i,j=1}^d \n{R}_{1} \n{U_i^*U_jPU_j^*U_i }_\infty \le \n{R}_{1}
\end{multline*}
for all projections $R\in M_n$.
Therefore, using the fact that for projections the rank, the trace, and the trace norm coincide, it follows from \eqref{eqn-consequence-of-Cheeger} that
\begin{multline*}
\varepsilon' \rank(P)\le \n{I_n-P-Q}_{1} = \tr(I_n-P-Q)\\ = \tr(I_n) - \tr(P) - \tr(Q)=n - \rank (P)-\rank (Q),
\end{multline*}
which yields the desired inequality.
\end{proof}

We would like to use Proposition \ref{prop:expander_rank_inequality} to establish that quantum expanders satisfy a $(\delta, \varepsilon, t)$-isoperimetric inequality.  To do this, we have to first establish a relationship between the rank and the diameter of a projection inside an expander.  We do this more generally for projections inside any connected quantum graph and then show that expanders are connected.  The importance of the connectedness assumption is that it implies that every projection has finite diameter.  A quantum graph (that is, an operator system) $\mathcal{S}\subseteq M_n$ is said to be connected if there is $m\in \mathcal{N}$ such that $\mathcal{S}^m=M_n$ \cite[Definition 3.1]{alej2019connectivity}.  See \cite{alej2019connectivity} for more information about connected quantum graphs.

\begin{proposition}
\label{prop:connected_graph_diameter}
Let $\mathcal{S}$ be a connected quantum graph, and $R \in M_n$ a projection.  If $k\in \mathbb{N}$ is such that $\diam (R)\leq k$, then $R\mathcal{S}^kR=RM_nR$.
\end{proposition}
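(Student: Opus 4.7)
The inclusion $R\mathcal{S}^k R \subseteq RM_nR$ is automatic since $\mathcal{S}^k \subseteq M_n$, so the content lies in the reverse containment. I will establish this by contradiction, combining a Hilbert--Schmidt duality argument in $RM_nR$ with an encoding of the witnessing operator as a pair of rank-one projections in the ampliation. Suppose $R\mathcal{S}^k R \subsetneq RM_nR$. Since both spaces are finite-dimensional, the Hilbert--Schmidt inner product supplies a nonzero $C \in RM_nR$ with $\tr(C^* X) = 0$ for every $X \in R\mathcal{S}^k R$. Because $C = RCR$ and the trace is cyclic, this is equivalent to saying that $\tr(C^* B) = 0$ for every $B \in \mathcal{S}^k$.

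The crux of the proof is to realize this orthogonality geometrically inside $M_n \overline{\otimes} \mathcal{B}(\ell_2)$ so that the diameter bound on $R$ can be brought to bear. Fix a singular value decomposition $C = \sum_i s_i u_i v_i^*$ and set
\[
\xi = \sum_i s_i\, u_i \otimes e_i, \qquad \eta = \sum_i v_i \otimes e_i
\]
in $\mathbb{C}^n \otimes \ell_2$, where $(e_i)$ is the canonical basis of $\ell_2$. Let $P_\xi$ and $P_\eta$ denote the rank-one projections onto these (nonzero) vectors; both lie in $M_n \overline{\otimes} \mathcal{B}(\ell_2)$, since this algebra equals $\mathcal{B}(\mathbb{C}^n \otimes \ell_2)$. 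A direct computation yields the key identity
\[
\xi^* (B \otimes \id) \eta = \sum_i s_i\, u_i^* B v_i = \tr(C^* B) \qquad \text{for every } B \in M_n,
\]
and consequently $P_\xi (B \otimes \id) P_\eta \neq 0$ if and only if $\tr(C^* B) \neq 0$.

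Specializing to $A = C$, for which $RAR = C$, one has $\tr(C^* RAR) = \|C\|_{\HS}^2 > 0$, so $P_\xi(RAR \otimes \id)P_\eta \neq 0$. Definition~\ref{def:diameter} then gives $\dist(P_\xi, P_\eta) \le \diam(R) \le k$. By the construction of $\mathbb{V}$ from $\mathcal{S}$, one has $\mathcal{V}_{k+\varepsilon} = \mathcal{S}^k$ for every $\varepsilon \in (0,1)$, so the distance bound produces some $B \in \mathcal{S}^k$ with $P_\xi (B \otimes \id) P_\eta \neq 0$; by the key identity, $\tr(C^* B) \neq 0$, contradicting the choice of $C$. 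The main obstacle is locating the correct encoding $(\xi,\eta)$: it must be arranged so that the ampliated pairing $\xi^*(B \otimes \id)\eta$ reproduces exactly the Hilbert--Schmidt pairing $\tr(C^* B)$. Once this identity is in place, the proof is essentially a translation between the dual statement that $C \perp \mathcal{S}^k$ and the geometric statement that $P_\xi$ and $P_\eta$ cannot be linked through $\mathcal{S}^k$.
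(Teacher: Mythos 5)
Your proof is correct. It follows the same overall contradiction scheme as the paper's proof --- assume $R\mathcal{S}^kR \subsetneq RM_nR$, produce a pair of projections in $M_n \overline{\otimes} \mathcal{B}(\ell_2)$ that are linked through $RM_nR$ but not through $\mathcal{S}^k$, and conclude $\diam(R) > k$ --- but it differs in how that pair of projections is obtained. The paper simply invokes \cite[Lemma 2.8]{Weaver2012Relations} to separate an element $A \in RM_nR \setminus R\mathcal{S}^kR$ from the subspace $R\mathcal{S}^kR$ by projections in the ampliation, and then passes to the range projections of $(R\otimes\id)P$ and $(R\otimes\id)Q$. You instead prove the needed separation from scratch in the finite-dimensional setting: you take a Hilbert--Schmidt annihilator $C$ of $R\mathcal{S}^kR$ inside $RM_nR$, encode it as rank-one projections $P_\xi, P_\eta$ via the singular value decomposition, and verify the identity $\pair{(B\otimes\id)\eta}{\xi} = \tr(C^*B)$, so that linkability through $B$ becomes exactly non-orthogonality to $C$. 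This is essentially the standard proof of the cited separation lemma, so the two arguments are close in spirit; what yours buys is self-containedness and transparency (no weak*-closure issues arise since everything is finite-dimensional, and the choice $A = C$ makes the witness for $\dist(P_\xi,P_\eta) \le \diam(R)$ explicit), while the paper's citation-based route is shorter and works verbatim in settings where an explicit SVD encoding is less convenient. Your handling of the final step --- using $\mathcal{V}_{k+\varepsilon} = \mathcal{V}_1^{\lfloor k+\varepsilon\rfloor} = \mathcal{S}^k$ for $\varepsilon \in (0,1)$ to extract an actual element $B \in \mathcal{S}^k$ from the infimum defining the distance --- is also correct and attends to the only real subtlety there.
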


\begin{proof}
Suppose to the contrary that $R\mathcal{S}^kR \subsetneq RM_nR$, and pick any $A\in RM_nR\setminus R\mathcal{S}^kR$.  Then by \cite[Lemma 2.8]{Weaver2012Relations}, there exist projections $P,Q\in RM_nR\overline{\otimes} \mathcal{B}(\ell_2)$ such that $P(RAR\otimes \id)Q\neq 0$, while $P(RBR\otimes\id )Q=0$ for all $B\in \mathcal{S}^k$.  Let $\tilde{P}$, $\tilde{Q}$ be the range projections of $(R\otimes \id)P$ and $(R\otimes \id)Q$, respectively.  The above implies that $\tilde{P}(RAR\otimes \id)\tilde{Q}\neq 0$, while $\tilde{P} (B\otimes \id)\tilde{Q}=0$ for all $B\in \mathcal{S}^k$.  By Definition \ref{def:distance} and Definition \ref{def:diameter}, this means that $\diam (R)>k$.  This is a contradiction, and so $RS^kR=RM_nR$.
\end{proof}

\begin{lemma}
\label{lem:small_diameter_low_rank}
Let $\Phi \colon M_n \to M_n$ be CPTP map and let $K_1, K_2, \dots , K_N\in M_n$  be such that $\Phi(X)=\sum_{j=1}^N K_jXK_j^*$ for all matrices $X\in M_n$.  If the quantum graph associated to  $\Phi$ is connected, then for every projection $R \in M_n$,
\[
\rank(R) \le N^{\diam(R)},
\]
where the diameter is taken with respect to the quantum graph metric associated to $\Phi$.
\end{lemma}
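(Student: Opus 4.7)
The plan is to combine Proposition~\ref{prop:connected_graph_diameter}, which identifies $R\mathcal{S}^k R$ with $R M_n R$ whenever $k \ge \diam(R)$ in a connected quantum graph, with an elementary dimension count on the operator system $\mathcal{V}_1 = \spa\{K_j^* K_i : 1 \le i, j \le N\}$ that generates the quantum metric induced by $\Phi$.

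I will first set $k = \diam(R)$. Since this quantum metric is defined via $\mathcal{V}_t = \mathcal{V}_1^{\lfloor t \rfloor}$, distances between projections take values in $\{0\}\cup\N$, and the connectedness hypothesis (which gives $\mathcal{V}_1^m = M_n$ for some $m\in\N$) guarantees that $k$ is in fact a finite nonnegative integer. Applying Proposition~\ref{prop:connected_graph_diameter} with $\mathcal{S} = \mathcal{V}_1$ then yields $R\mathcal{V}_1^k R = R M_n R$. Since $R M_n R$ is $*$-isomorphic to $M_{\rank(R)}$, this gives $\dim(R M_n R) = \rank(R)^2$.

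Next, I will bound $\dim(\mathcal{V}_1^k)$ from above. Because $\mathcal{V}_1$ is spanned by the $N^2$ operators $K_j^* K_i$, the product subspace $\mathcal{V}_1^k$ is spanned by products of $k$ such generators, of which there are at most $N^{2k}$. Hence $\dim(R\mathcal{V}_1^k R) \le \dim(\mathcal{V}_1^k) \le N^{2k}$. Combining the two estimates gives $\rank(R)^2 \le N^{2k}$, and taking square roots yields the desired inequality $\rank(R) \le N^{\diam(R)}$.

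The proof is short and presents no essential obstacle; the only care required is in verifying that $\diam(R)$ is a finite nonnegative integer so that Proposition~\ref{prop:connected_graph_diameter} may be invoked directly with $k = \diam(R)$, and in correctly counting the spanning products of $\mathcal{V}_1^k$. One may view the statement as a quantum analogue of the classical observation that in a connected graph of bounded branching, the ball of radius equal to the diameter contains at most an exponentially-in-diameter number of vertices.
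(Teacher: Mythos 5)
Your proof is correct and follows essentially the same route as the paper's: set $k=\diam(R)$, invoke Proposition~\ref{prop:connected_graph_diameter} to get $RM_nR = R\mathcal{S}^kR$, and compare $\dim(RM_nR)=\rank(R)^2$ with $\dim(\mathcal{S}^k)\le (N^2)^k$. The extra care you take in checking that $\diam(R)$ is a finite nonnegative integer is a point the paper leaves implicit, but it does not change the argument.
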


\begin{proof}

Let $k = \diam(R)$
and let $\mathcal{S} = \spa\{ K_j^*K_i \mid 1\le i,j \le N \}$ be the associated quantum graph.
It follows from Proposition \ref{prop:connected_graph_diameter} that
$RM_nR = R\mathcal{S}^kR$
and therefore
\[
\rank(R)^2 = \dim( RM_nR ) = \dim( R\mathcal{S}^kR) \le \dim( \mathcal{S}^k) \le (N^2)^k,
\]
which yields the desired inequality.
\end{proof}

\begin{proposition}
Let $\Phi \colon M_n \to M_n$ be a CPTP unital map with an $\varepsilon$-spectral gap.
Then the associated quantum graph is connected.  In particular, every $d$-regular $\varepsilon$-quantum expander is connected.
\end{proposition}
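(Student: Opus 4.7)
The plan is to argue by contradiction using the Cheeger-type inequality of Lemma \ref{lemma-Cheeger}: if the quantum graph $\mathcal{S}$ associated to $\Phi$ were not connected, I would exhibit a projection on which the relevant inner product must vanish, contradicting the strictly positive lower bound forced by the spectral gap. First I would analyze the algebraic structure of $\mathcal{S}$. Let $K_1,\dots,K_N$ be Kraus operators for $\Phi$, so that $\mathcal{S}=\spa\{K_j^*K_i\}_{i,j}$. Because $\Phi$ is trace-preserving, $I_n=\sum_i K_i^*K_i\in\mathcal{S}$, and therefore the chain $\mathcal{S}\subseteq \mathcal{S}^2\subseteq\mathcal{S}^3\subseteq\cdots$ is non-decreasing; sitting inside the finite-dimensional space $M_n$, it must stabilize at some $\mathcal{T}:=\mathcal{S}^M$. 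This $\mathcal{T}$ is self-adjoint, contains $I_n$, and is closed under multiplication (since $\mathcal{S}^{2M}=\mathcal{S}^M$), so it is a unital $*$-subalgebra of $M_n$. Connectivity of the quantum graph is exactly the assertion that $\mathcal{T}=M_n$.

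I would then assume, for contradiction, that $\mathcal{T}\subsetneq M_n$. The commutant $\mathcal{T}'$ then properly contains $\mathbb{C}\cdot I_n$, and as a nontrivial finite-dimensional unital $*$-subalgebra of $M_n$ it must contain a nontrivial projection $P$. Replacing $P$ by $I_n-P$ if necessary, I may further arrange $0<\rank(P)\le n/2$.

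The key computation is then short. Since $K_j^*K_i\in\mathcal{S}\subseteq\mathcal{T}$ and $P$ commutes with every element of $\mathcal{T}$, we have $PK_j^*K_i(I_n-P)=K_j^*K_i P(I_n-P)=0$ for all $i,j$. Expanding via the Kraus form and using cyclicity of the trace,
\[
\pair{\Phi(P)}{\Phi(I_n-P)}_{\HS}=\sum_{i,j}\tr\big(PK_j^*K_i(I_n-P)K_i^*K_j\big)=0.
\]
On the other hand, Remark \ref{remark:Cheeger} identifies $\tr\big((I_n-P)\Phi^*\Phi(P)\big)$ with precisely this inner product, so Lemma \ref{lemma-Cheeger} yields $\pair{\Phi(P)}{\Phi(I_n-P)}_{\HS}\ge\tfrac{1-\varepsilon}{2}\rank(P)>0$, the desired contradiction. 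Hence $\mathcal{T}=M_n$ and $\mathcal{S}$ is connected. The ``in particular'' clause follows at once from Definition \ref{def:expander}, since a quantum expander is by definition a CPTP unital map with an $\varepsilon$-spectral gap.

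The main obstacle is recognizing the right projection to supply the contradiction; once one observes that any $P$ in the commutant of the algebra generated by $\mathcal{S}$ automatically forces $\pair{\Phi(P)}{\Phi(I_n-P)}_{\HS}=0$, the spectral gap immediately rules out the existence of such a nontrivial $P$, which is what disconnectedness would produce.
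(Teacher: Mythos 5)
Your proof is correct, and its second half --- the trace computation showing $\pair{\Phi(P)}{\Phi(I_n-P)}_{\HS}=0$ and the contradiction with Lemma \ref{lemma-Cheeger} via Remark \ref{remark:Cheeger} --- is essentially identical to the paper's. Where you differ is in how the crucial projection is produced. The paper simply invokes an external characterization of disconnectedness (Theorem 3.3 of the cited connectivity paper) to obtain a nontrivial projection $P$ with $P\mathcal{S}(I_n-P)=0$. You instead prove this from scratch: since $I_n=\sum_i K_i^*K_i\in\mathcal{S}$ (trace preservation), the powers $\mathcal{S}^m$ increase and stabilize to a unital $*$-subalgebra $\mathcal{T}$, connectivity is equivalent to $\mathcal{T}=M_n$, and if $\mathcal{T}\subsetneq M_n$ the bicommutant theorem forces $\mathcal{T}'\supsetneq\mathbb{C}I_n$, whence a nontrivial projection $P\in\mathcal{T}'$ exists (spectral projections of a non-scalar self-adjoint element) and automatically satisfies $PK_j^*K_i(I_n-P)=K_j^*K_iP(I_n-P)=0$. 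All of these steps are sound. What your route buys is self-containedness --- the proposition no longer depends on the external reference --- at the cost of a slightly longer argument; in effect you have reproved inline the equivalence that the paper outsources. One cosmetic remark: your projection lies in the commutant of the algebra generated by $\mathcal{S}$, which is a priori a special case of the paper's condition $P\mathcal{S}(I_n-P)=0$, but this is all that is needed for the contradiction, so nothing is lost.
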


\begin{proof}
Let $K_1,\dotsc,K_N \in M_n$ be matrices such that $\Phi(X) = \sum_{j=1}^N K_jXK_j^*$ for each $X \in M_n$.
Suppose that the quantum graph $\mathcal{S} = \spa\{ K_j^*K_i \mid 1\le i,j \le N \}$ is disconnected.
By \cite[Theorem 3.3]{alej2019connectivity}, there exists a nontrivial projection $P\in M_n$ such that $P\mathcal{S} (I_n-P) =0$,
and without loss of generality, we may assume $0<\rank(P)\le n/2$.
In particular, $PK_j^*K_i(I_n-P)=0$ for all $1\le i, j\le N$.
Therefore
\begin{multline*}
\pair{\Phi(P)}{\Phi(I_n-P)}_{\HS} = \sum_{i,j=1}^N \pair{K_jPK_j^*}{K_i(I_n-P)K_i^*}_{\HS} \\
=  \sum_{i,j=1}^N \tr\big( K_i(I_n-P)K_i^*K_jPK_j^* \big) = \sum_{i,j=1}^N \tr\big( K_i^*K_jPK_j^*K_i(I_n-P) \big) = 0.
\end{multline*}
This contradicts Lemma \ref{lemma-Cheeger}, and so the quantum graph associated to $\Phi$ is connected.
\end{proof}

Propositions \ref{prop:equi_coarse_infinite_dimension} and \ref{prop:expander_rank_inequality} and Lemma \ref{lem:small_diameter_low_rank} together yield our main theorem.

\begin{theorem}\label{thm:infinite-asymptotic-dimension}
If a quantum metric space $\mathcal{M}$ is equi-coarsely embeddable into a sequence of reflexive $d$-regular $\varepsilon$-quantum expanders, then $\asdim(\mathcal{M})=\infty$.  In particular, this holds whenever $\mathcal{M}$ admits a sequence of reflexive $d$-regular $\varepsilon$-quantum expanders as metric quotients.
\end{theorem}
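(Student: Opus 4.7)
The plan is to combine the already-established Propositions \ref{prop:equi_coarse_infinite_dimension} and \ref{prop:expander_rank_inequality} together with Lemma \ref{lem:small_diameter_low_rank}. The missing link is to verify that a sequence of reflexive $d$-regular $\varepsilon$-quantum expanders (appropriately reindexed) satisfies a $(\delta,\varepsilon',t)$-isoperimetric inequality in the sense required by Proposition \ref{prop:equi_coarse_infinite_dimension}, with $\delta$ and $\varepsilon'$ independent of $t$.

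First I would fix constants. Take $\delta$ to be any real number greater than $1$ (e.g.\ $\delta = 2$) and set $\varepsilon' = (1-\varepsilon)/2$. By Proposition \ref{prop:expander_rank_inequality}, every $d$-regular $\varepsilon$-quantum expander $\Phi_m\colon M_{n_m}\to M_{n_m}$ in the given sequence satisfies
\[
\rank\big( (P)_\delta \big) \ge (1+\varepsilon')\rank(P)
\]
for every projection $P \in M_{n_m}$ with $\rank(P) \le n_m/2$. On the other hand, since every such expander is connected (by the proposition immediately preceding the theorem) and has a Kraus representation with exactly $d$ unitaries, Lemma \ref{lem:small_diameter_low_rank} gives the a priori bound $\rank(P) \le d^{\diam(P)}$ for every projection $P \in M_{n_m}$.

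Now I would reindex. For each $t>0$, use the fact that $n_m\to\infty$ to pick an index $m(t)$ with $n_{m(t)} \ge 2d^{t}$. Then any projection $P \in M_{n_{m(t)}}$ with $\diam(P) \le t$ satisfies $\rank(P) \le d^{t} \le n_{m(t)}/2$, and hence the rank inflation above applies. This shows that $M_{n_{m(t)}}$ satisfies a $(\delta,\varepsilon',t)$-isoperimetric inequality, with $\delta$ and $\varepsilon'$ independent of $t$. Since passing to a subfamily preserves the equi-coarse property, the family $\{\phi_{m(t)}\colon \mathcal{M}\to M_{n_{m(t)}}\}_{t>0}$ is still equi-coarse, and each $M_{n_{m(t)}}$ is reflexive by hypothesis, so Proposition \ref{prop:equi_coarse_infinite_dimension} yields $\asdim(\mathcal{M})=\infty$.

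For the ``in particular'' part, if each $M_{n_m}$ is a metric quotient of $\mathcal{M}$, then by \cite[Corollary 2.37]{KuperbergWeaver} the associated quotient morphisms $\phi_m\colon\mathcal{M}\to M_{n_m}$ are co-isometric, and by Remark \ref{moduli-for-co-isometric-morphism} they satisfy $\tilde{\omega}_{\phi_m}(t)\ge t$ and $\tilde{\rho}_{\phi_m}(t)\le t$ for every $t\ge 0$. Taking $f(t)=g(t)=t$ exhibits the family as equi-coarse, so the first part of the theorem applies. The only subtle point in the whole argument is matching the two competing rank constraints (the upper bound from the diameter via the connected-graph lemma, and the upper bound $n_m/2$ from the expander inequality); the flexibility in choosing the subsequence $m(t)$ is what makes this possible and is the key step where $n_m\to\infty$ is used in an essential way.
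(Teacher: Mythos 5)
Your proposal is correct and is essentially the argument the paper intends: the paper only says that Propositions \ref{prop:equi_coarse_infinite_dimension} and \ref{prop:expander_rank_inequality} and Lemma \ref{lem:small_diameter_low_rank} "together yield" the theorem, and your write-up supplies exactly the missing glue --- using the connectivity-based bound $\rank(P)\le d^{\diam(P)}$ together with $n_m\to\infty$ to reindex so that every projection of diameter at most $t$ has rank at most $n_{m(t)}/2$, whence the $(\delta,\varepsilon',t)$-isoperimetric inequality holds and Proposition \ref{prop:equi_coarse_infinite_dimension} applies. The treatment of the "in particular" clause via \cite[Corollary 2.37]{KuperbergWeaver} and Remark \ref{moduli-for-co-isometric-morphism} is also exactly as the paper intends.
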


We point out that, in particular, Theorem \ref{thm:infinite-asymptotic-dimension} covers the case when $\mathcal{M}$ equi-coarsely contains a sequence of $d$-regular $\varepsilon$-classical expanders, thanks to the following proposition.

\begin{proposition}
\label{prop:finite_is_reflexive}
The canonical quantum metric associated to a classical metric is always reflexive.
\end{proposition}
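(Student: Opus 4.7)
The plan is to unfold the definitions and observe that the matrix entries of operators in $\mathcal{V}_t$ can be extracted by sandwiching with rank-one projections onto the canonical basis vectors. Since the defining condition of $\mathcal{V}_t$ is precisely stated in terms of such matrix entries, reflexivity will follow almost immediately.

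More concretely, fix $t \in [0,\infty)$ and denote by $\mathcal{V}_t^{\mathrm{ref}}$ the reflexive hull of $\mathcal{V}_t$, i.e., the set of all $B \in \mathcal{B}(\ell_2(X))$ such that $PBQ=0$ whenever $P\mathcal{V}_tQ=\{0\}$ for projections $P,Q \in \mathcal{B}(\ell_2(X))$. The inclusion $\mathcal{V}_t \subseteq \mathcal{V}_t^{\mathrm{ref}}$ is automatic, so the task is to prove $\mathcal{V}_t^{\mathrm{ref}} \subseteq \mathcal{V}_t$. Let $B \in \mathcal{V}_t^{\mathrm{ref}}$; by definition of the canonical quantum metric, to show $B \in \mathcal{V}_t$ it suffices to verify that $\langle Be_x, e_y \rangle = 0$ whenever $(x,y) \notin d^{-1}[0,t]$.

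To this end, fix such a pair $(x,y)$ with $d(x,y) > t$, and let $P_y$ and $Q_x$ be the rank-one orthogonal projections onto $\mathbb{C}e_y$ and $\mathbb{C}e_x$ respectively. For any $A \in \mathcal{V}_t$ one has $P_y A Q_x = \langle Ae_x, e_y\rangle\, e_y e_x^*$, and by the defining property of $\mathcal{V}_t$ (using $d(x,y)>t$) this matrix coefficient is zero. Hence $P_y \mathcal{V}_t Q_x = \{0\}$, which by the assumption $B \in \mathcal{V}_t^{\mathrm{ref}}$ forces $P_y B Q_x = 0$, i.e., $\langle Be_x, e_y\rangle = 0$. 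Since $(x,y)$ was arbitrary outside $d^{-1}[0,t]$, we conclude $B \in \mathcal{V}_t$, and therefore $\mathcal{V}_t$ is reflexive.

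I do not anticipate any real obstacle here: the argument is essentially a matter of unwinding definitions, because the canonical quantum metric is defined via a vanishing condition on entries in a distinguished orthonormal basis, and those entries are exactly the quantities that the rank-one projections onto basis vectors detect. The only point worth verifying carefully is that the projections $P_y, Q_x$ used need only belong to $\mathcal{B}(\ell_2(X))$ (not to $\ell_\infty(X)$), which is consistent with the definition of operator reflexivity quoted earlier.
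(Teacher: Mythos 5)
Your proof is correct and is essentially the same as the paper's: the paper also sandwiches $\mathcal{V}_t$ between the rank-one projections onto $\mathbb{C}e_x$ and $\mathbb{C}e_y$ (denoted $V_{xx}$, $V_{yy}$ there) for a pair with $d(x,y)>t$, deduces $V_{xx}\mathcal{V}_tV_{yy}=\{0\}$, and concludes that the corresponding matrix entry of $B$ vanishes. No gaps.
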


\begin{proof}
Let $(X,d)$ be a classical metric.
Let $t \ge 0$.
By \cite[Prop. 2.5]{KuperbergWeaver}, the canonical quantum metric on $\ell_\infty(X)$ associated to $d$ is given by
\begin{equation}\label{eqn:quantum-metric-associated-to-classical}
\mathcal{V}_t = \big\{ A \in \mathcal{B}(\ell_2(X)) \mid d(x,y) > t \Rightarrow \pair{Ae_y}{e_x} = 0\big\}.
\end{equation}
Let us now show that $\mathcal{V}_t$ is reflexive.
To that end, let $B \in \mathcal{B}(\ell_2(X))$ be such that for any projections $P,Q \in \mathcal{B}(\ell_2(X))$ such that $P \mathcal{V}_tQ = \{0\}$, it follows that $PBQ=0$; we need to show that $B$ belongs to $\mathcal{V}_t$.
Recall that $V_{xy}$ denotes the mapping $g \mapsto \pair{g}{e_y}e_x$.
Let $x,y \in X$ satisfy $d(x,y)>t$.
Note that $V_{xx}$ and $V_{yy}$ are projections, and it follows from \eqref{eqn:quantum-metric-associated-to-classical} that $V_{xx}\mathcal{V}_tV_{yy} = \{0\}$.
Therefore,  $V_{xx}BV_{yy} = 0$. But this implies $\pair{Be_y}{e_x} = 0$, and thus $B \in \mathcal{V}_t$ by apppealing to \eqref{eqn:quantum-metric-associated-to-classical} again.
\end{proof}

One final remark is in order regarding Theorem \ref{thm:infinite-asymptotic-dimension} and Proposition \ref{prop:finite_is_reflexive}.  We have shown that equi-coarse containment of reflexive quantum expanders implies infinite asymptotic dimension and we have also shown that quantum expanders induced by classical expanders are reflexive.  While this is enough to provide a generalization of \cite[Sec. 2.3]{Dranishnikov-Sapir} to the realm of quantum metric spaces, what we have not shown is the existence of a nontrivial reflexive quantum expander.  That is, we do not actually know whether every reflexive quantum expander is induced by a classical expander.  It would be interesting to know the answer to this question, but it would be more interesting still to know whether the reflexivity assumption in Theorem \ref{thm:infinite-asymptotic-dimension} (or more generally Proposition \ref{prop:equi_coarse_infinite_dimension}) can be dropped.  As with the proof of Theorem \ref{thm:nondisjoint_union}, it was very important to be able to place an upper bound on the diameter of a neighborhood of a projection in terms of the diameter of the projection (Part (c) of Lemma \ref{lemma:r-saturated-union}).  This is what allowed us to repeatedly apply the isoperimetric inequality to derive the inequality found in Remark \ref{remark:isoperimetric-repeat}.

\bibliography{references}
\bibliographystyle{amsalpha}

\end{document}